\newtheorem{theorem}{Theorem}[section]
\newtheorem{Prop}[theorem]{Proposition}
\newtheorem{Thm}[theorem]{Theorem}
\newtheorem{Lem}[theorem]{Lemma}
\newtheorem{Con}[theorem]{Conjecture}
\theoremstyle{definition}
\newtheorem{Def}[theorem]{Definition}
\newtheorem{Ex}[theorem]{Example}
\theoremstyle{remark}
\numberwithin{equation}{section}
\newcommand{\Z}{{\mathbb Z}}
\newcommand{\In}{\mbox{\rm Inn}}
\newcommand{\Aut}{\mbox{\rm Aut}}
\newcommand{\id}{\mbox{\rm id}}
\begin{document}

\title{On classification of quandles of cyclic type} 

\author{Seiichi Kamada}
\address[S.~Kamada]{Department of Mathematics, Osaka City University, 
Osaka 558-8585, Japan} 
\email{skamada@sci.osaka-cu.ac.jp} 

\author{Hiroshi Tamaru}
\address[H.~Tamaru]{Department of Mathematics, Hiroshima University, 
Higashi-Hiroshima 739-8526, Japan}
\email{tamaru@math.sci.hiroshima-u.ac.jp}

\author{Koshiro Wada} 
\address[K.~Wada]{Department of Mathematics, Hiroshima University, 
Higashi-Hiroshima 739-8526, Japan}
\email{d126092@hiroshima-u.ac.jp}

\keywords{Finite quandles, two-point homogeneous quandles, quandles of cyclic type}
\thanks{
The first author was partially supported by KAKENHI (21340015, 23654027). 
The second author was partially supported by KAKENHI (24654012).} 

\begin{abstract}
In this paper, we study quandles of cyclic type, 
which form a particular subclass of finite quandles. 
The main result of this paper describes the set of 
isomorphism classes of quandles 
of cyclic type in terms of certain cyclic permutations. 
By using our description, we 
give a direct classification of 
quandles of cyclic type with cardinality up to $12$. 
\end{abstract}

\maketitle

\section{Introduction}

The notion of quandle was introduced by Joyce (\cite{Joyce}) 
as a set with a binary operator, 
satisfying three axioms corresponding to 
Reidemeister moves of a classical knot. 
In knot theory, quandles play a lot of important roles, 
and have provided several invariants of knots 
(\cite{CJKLS, FennRourke, IIJO, KamadaOshiro, Nosaka}). 
For further information, 
we refer to \cite{Carter, Kamada} and references therein. 
Among others, 
Carter, Jelsovsky, the first author, 
Langford and Saito (\cite{CJKLS}) gave 
strong invariants, called quandle cocycle invariants, defined by quandle cocycles. 
For example, 
they gave a $3$-cocycle of the dihedral quandle $R_3$ with cardinality $3$, 
and apply it to prove the non-invertibility of the 2-twist span trefoil. 

Quandles provide several invariants of knots, 
but on the other hand, 
it is difficult to calculate these invariants explicitly, 
especially if the structure of the quandle is complicated. 
Therefore, it is of importance to study special classes of quandles, 
whose quandle structures are easy to handle. 
From this point of view, 
we study quandles of cyclic type, 
whose name was introduced 
in \cite{Tamaru}. 
A quandle with cardinality $n$ is said to be of \textit{cyclic type} 
if all right multiplications are cyclic permutations of order $n-1$. 
Since this quandle structure is very tractable, 
quandles of cyclic type are potentially useful for applications in knot theory. 

We here recall some known results on quandles of cyclic type. 
In \cite{Lopes-Roseman}, 
Lopes and Roseman essentially studied quandles of cyclic type, 
which they call 
quandles with constant profile $(\{1, n-1 \}, \dots, \{1, n-1 \})$. 
They studied such quandles in terms of cyclic permutations, 
and classified those with cardinality up to $8$. 
Subsequently, Hayashi (\cite{Hayashi}) studied 
the structures of quandles of cyclic type, 
and gave a table of those with cardinality up to $35$. 
Note that his table is obtained by using the list of connected quandles 
with cardinality up to $35$ 
(called Vendramin's list \cite{Vendramin}). 
Independently, the second author (\cite{Tamaru}) studied quandles of cyclic type, 
and classified those with prime cardinality. 
In particular, for every prime number $p \geq 3$, 
there exists a quandle of cyclic type with cardinality $p$. 
This suggests that the class of quandles of cyclic type is fruitful. 

In this paper, we study and describe the set $C_n$ of 
isomorphism classes of quandles of cyclic type with cardinality $n$. 
In fact, our main theorem gives a bijection from $C_n$ onto $F_{n}$, 
where $F_n$ 
denotes the set of cyclic permutations of order $n-1$ satisfying two conditions. 
This bijection is useful for studying quandles of cyclic type, 
since such quandles can be characterized by certain cyclic permutations. 
We then apply our main theorem to the classification of quandles of cyclic type, 
and provide a list of those with cardinality up to $12$.  
Our study extends some of the results by Lopes and Roseman (\cite{Lopes-Roseman}). 
In fact, they also studied cyclic permutations 
determined by quandles of cyclic type, which are similar to ours. 
Our new contribution is to show that it gives a well-defined and bijective map. 
Furthermore, our argument 
gives a direct and classification-free 
proof for a part of the table given by Hayashi (\cite{Hayashi}). 

This paper is organized as follows. 
In Section~2 
we recall some fundamental notions on quandles. 
In Section~3,  
the definition and some properties of quandles of cyclic type are summarized. 
We state the main theorem in Section~4, 
and give a table of quandles of cyclic type with cardinality up to $12$. 
Section~5 
contains the proof of the main theorem. 

The authors would like to express our gratitude to Chuichiro Hayashi 
for valuable comments, 
which lead us to \cite{Hayashi, Lopes-Roseman}.

\section{Preliminaries for quandles}

In this section we recall some fundamental notions on quandles.  

\begin{Def}
\label{def:quandle}
Let $X$ be a set and $\ast : X \times X \to X$ 
be a binary operator. 
The pair $(X , \ast)$ is called a 
\textit{quandle}
if 
\begin{enumerate}
\item[(Q1)]
$\forall x \in X$, $x \ast x = x$, 
\item[(Q2)]
$\forall x , y \in X$, 
$\exists! z \in X$ 
: $z \ast y = x$, and 
\item[(Q3)]
$\forall x , y , z \in X$, 
$(x \ast y) \ast z = (x \ast z) \ast (y \ast z)$. 
\end{enumerate}
\end{Def}

If $(X , \ast)$ is a quandle, 
then $\ast$ is called a \textit{quandle structure} on $X$. 
We restate the definition of a quandle
as follows. 

\begin{Prop}[\cite{FennRourke, Tamaru}]
\label{prop:s-quandle-structure}
Let $X$ be a set, and 
assume that there exists a map $s_x : X \to X$ for every $x \in X$. 
Then, the binary operator $\ast$ defined by 
$y \ast x := s_x(y)$ 
is a quandle structure on $X$ if and only if 
\begin{enumerate}
\item[(S1)]
$\forall x \in X$, $s_x(x) = x$, 
\item[(S2)]
$\forall x \in X$, $s_x$ is bijective, and 
\item[(S3)]
$\forall x , y \in X$, $s_x \circ s_y = s_{s_x(y)} \circ s_x$. 
\end{enumerate}
\end{Prop}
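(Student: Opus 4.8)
The plan is to check that, under the dictionary $y \ast x = s_x(y)$, each quandle axiom (Q1)--(Q3) is equivalent to the correspondingly numbered condition (S1)--(S3); the proposition then follows axiom by axiom.

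First I would treat (Q1) $\Leftrightarrow$ (S1). Since $x \ast x = s_x(x)$ by definition, the equality $x \ast x = x$ for all $x \in X$ holds if and only if $s_x(x) = x$ for all $x \in X$. Next, for (Q2) $\Leftrightarrow$ (S2), fix $x, y \in X$; the condition $z \ast y = x$ reads $s_y(z) = x$. Hence (Q2), which asserts that for all $x, y$ there is a unique such $z$, says exactly that each map $s_y$ is surjective (from the existence clause) and injective (from the uniqueness clause), that is, bijective; conversely, if every $s_y$ is bijective then $z := s_y^{-1}(x)$ is the unique solution, so (S2) implies (Q2).

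Finally, for (Q3) $\Leftrightarrow$ (S3), I would unwind both sides of (Q3) for arbitrary $x, y, z \in X$:
\[
(x \ast y)\ast z = s_z(s_y(x)) = (s_z \circ s_y)(x), \qquad
(x \ast z)\ast(y \ast z) = s_{s_z(y)}(s_z(x)) = (s_{s_z(y)} \circ s_z)(x).
\]
Therefore (Q3) holds if and only if $s_z \circ s_y = s_{s_z(y)} \circ s_z$ for all $y, z \in X$, which is precisely (S3) after renaming $z$ as $x$. Assembling the three equivalences proves the proposition.

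I do not anticipate a genuine obstacle here: the argument is a careful translation between the two formulations. The only points requiring attention are keeping the order of composition and the subscript $s_z(y)$ correct when expanding (Q3), and noticing that in (Q2) it is the right multiplication $s_y$ — not $s_x$ — whose bijectivity is being encoded.
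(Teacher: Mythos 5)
Your proof is correct: the axiom-by-axiom translation (Q1)$\Leftrightarrow$(S1), (Q2)$\Leftrightarrow$(S2), (Q3)$\Leftrightarrow$(S3) is exactly the standard argument, and your expansion of (Q3) with the composition order and the subscript $s_z(y)$ is right. The paper itself states this proposition with citations and gives no proof, so there is nothing to compare against beyond noting that your verification is the expected one.
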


Instead of Definition~\ref{def:quandle}, 
throughout this paper, we denote the quandle by $X = (X,s)$ 
with the quandle structure 
\begin{align*}
s : X \to \mathrm{Map} (X,X) : x \mapsto s_x . 
\end{align*}
Here $\mathrm{Map} (X,X)$ denotes the set of all maps from $X$ to $X$.

\begin{Ex}\label{ex:1} 
The following $(X, s)$ are quandles$:$ 
\begin{enumerate}
\item[(1)] 
Let $X$ be any set and $s_x := \id_X$ for every $x \in X$. 
Then the pair $(X, s)$ is called the \textit{trivial quandle}. 

\item[(2)] 
Let $X := \{1, \dots, n\}$ 
and $s_i(j) := 2i-j$ ($\mathrm{mod}\ n$) for any $i, j\in X$. 
Then the pair $(X, s)$ is called the 
\textit{dihedral quandle} with cardinality $n$. 

\item[(3)] 
Let $X := \{ 1, 2, 3, 4 \}$ and 
$$
s_1:=(234), \ s_2:=(143), \ s_3:=(124), \ s_4:=(132).
$$
Then the pair $(X, s)$ is called the \textit{tetrahedron quandle}.   
\end{enumerate}
\end{Ex}

Note that $(234)$, $(143)$, and so on, denote the cyclic permutations. 
We use this symbol frequently in the later sections.

\begin{Def}\label{def:iso}
Let $(X,s^X)$, $(Y,s^Y)$ be quandles, and $f : X \to Y$ be a map. 
\begin{enumerate}
\item
$f$ is called a 
\textit{homomorphism} 
if  for every $x \in X$, 
$f \circ s^X_x = s^Y_{f(x)} \circ f$ holds. 
\item
$f$ is called an 
\textit{isomorphism} 
if $f$ is a bijective homomorphism. 
\end{enumerate}
\end{Def}

An isomorphism from a quandle $(X, s)$ 
onto itself is called an \textit{automorphism}. 
The set of automorphisms of $(X, s)$ forms a group, 
which is called the 
\textit{automorphism group} and denoted by $\Aut (X, s)$. 

Note that $s_x$ $(x \in X)$ is an automorphism of $(X,s)$.   
The subgroup of $\Aut (X, s)$ generated by 
$\{s_x \mid x \in X\}$ is called the 
\textit{inner automorphism group} 
of $(X, s)$ and denoted by $\In(X, s)$. 

\begin{Def}\label{def:connected} 
A quandle $(X, s)$ is said to be \textit{connected} 
if $\In(X,s)$ acts transitively on $X$. 
\end{Def}

On the connectedness of the quandles in Example~\ref{ex:1}, 
the following is well-known. 
We denote by $\# X$ the cardinality of $X$.

\begin{Ex}
One has the following: 
\begin{enumerate}
\item
The trivial quandle $(X,s)$ is connected if and only if $\# X = 1$. 
\item
The dihedral quandle $(X,s)$ is connected if and only if $\# X$ is odd. 
\item
The tetrahedron quandle is connected. 
\end{enumerate}
\end{Ex}

\section{Quandles of cyclic type} 

\label{section:c-type}

From now on we always assume that 
a quandle $X = (X,s)$ is finite and satisfies $\# X \geq 3$. 
In this section, we recall the definition and some properties 
of quandles of cyclic type given in 
\cite{Tamaru}. 

\begin{Def}[\cite{Tamaru}]
A quandle $(X,s)$ with $\# X = n \geq 3$ is said to be of 
\textit{cyclic type} 
if for every $x \in X$, 
$s_x$ acts on $X \setminus \{ x \}$ as a cyclic permutation of order $n-1$. 
\end{Def}

This notion is closely related to the notion of two-point homogeneous quandle. 
A quandle $(X, s)$ is said to be 
\textit{two-point homogeneous} 
if for any $(x_1, x_2), (y_1,y_2) \in X \times X$ 
satisfying $x_1 \neq x_2$ and $y_1 \neq y_2$, 
there exists $f \in \In (X, s)$ such that $(f(x_1), f(x_2)) = (y_1, y_2)$. 
The second author studied quandles of cyclic type in \cite{Tamaru} 
because of the following proposition. 

\begin{Prop}[\cite{Tamaru}]
\label{prop:c-TPH}
Every quandle of cyclic type is two-point homogeneous. 
\end{Prop}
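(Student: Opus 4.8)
The plan is to derive two-point homogeneity from two observations. The first is purely definitional: for each $x \in X$ the map $s_x$ fixes $x$ (by (S1)) and, being a cyclic permutation of order $n-1$ on the $(n-1)$-element set $X \setminus \{x\}$, is a single $(n-1)$-cycle there, hence acts transitively on $X \setminus \{x\}$. The second is that $(X,s)$ is connected; I would establish this as an intermediate step, since it is exactly the transitivity of $\In(X,s)$ on $X$ that lets one match the first coordinates of the two pairs.

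For connectedness, I would look at the partition of $X$ into $\In(X,s)$-orbits and fix $x \in X$. The element $s_x \in \In(X,s)$ preserves each orbit; but $s_x$ restricted to $X \setminus \{x\}$ is transitive, so it preserves no proper nonempty subset of $X \setminus \{x\}$. Consequently every orbit is either a singleton contained in $\{x\}$ or contains all of $X \setminus \{x\}$. A singleton orbit $\{x\}$ is impossible: it would force $s_y(x) = x$ for every $y \neq x$, contradicting that $s_y$, a nontrivial $(n-1)$-cycle on $X\setminus\{y\}$, fixes only $y$. Hence $X$ is a single orbit and $(X,s)$ is connected.

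With these in hand, let $(x_1, x_2), (y_1, y_2) \in X \times X$ with $x_1 \neq x_2$ and $y_1 \neq y_2$. Using connectedness I would pick $g \in \In(X,s)$ with $g(x_1) = y_1$; then $g(x_2) \in X \setminus \{y_1\}$. Since $s_{y_1}$ acts transitively on $X \setminus \{y_1\}$, there is an integer $k$ with $s_{y_1}^{k}(g(x_2)) = y_2$. Setting $f := s_{y_1}^{k} \circ g \in \In(X,s)$, we get $f(x_1) = s_{y_1}^{k}(y_1) = y_1$ and $f(x_2) = y_2$, so $(f(x_1), f(x_2)) = (y_1, y_2)$, which is precisely what two-point homogeneity demands.

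I do not expect a serious obstacle here; the whole content is the observation that a cyclic permutation of order $n-1$ on an $(n-1)$-element set is transitive. The one place that needs a little care is the connectedness step — ruling out a singleton orbit — where one genuinely uses that $s_x$ is a full $(n-1)$-cycle rather than merely some permutation of $X \setminus \{x\}$; alternatively, if the connectedness of quandles of cyclic type is available as a known fact, one can quote it and pass directly to the last paragraph.
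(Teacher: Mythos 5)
Your proof is correct. The paper itself gives no argument for this proposition (it is quoted from the reference [Tamaru]), but your two-step reduction --- first establishing connectedness from the fact that each $s_x$ is a full $(n-1)$-cycle on $X \setminus \{x\}$, then using connectedness to match first coordinates and a power of $s_{y_1}$ to match second coordinates --- is precisely the standard argument behind the cited result, and every step (including the exclusion of a singleton orbit, which needs $n \geq 3$) checks out.
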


The following is a characterization of quandles of cyclic type, 
which we use in the latter arguments. 
In particular, quandles of cyclic type must be connected. 

\begin{Prop}[\cite{Tamaru}]
\label{prop:cyclic-isotropic}
Let $X = (X,s)$ be a quandle with $\# X = n \geq 3$. 
Then, $X$ is of cyclic type if and only if 
\begin{enumerate}
\item[(i)]
$X$ is connected, and 
\item[(ii)]
there exists $x \in X$ such that 
$s_x$ acts on $X \setminus \{ x \}$ as a cyclic permutation of order $n-1$. 
\end{enumerate}
\end{Prop}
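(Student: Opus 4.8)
The plan is to prove the two implications separately; essentially all the work is in the ``if'' direction, and it rests on the fact that inner automorphisms conjugate right multiplications to right multiplications.

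For the ``only if'' direction I would argue as follows. If $X$ is of cyclic type, then (ii) is immediate, since by definition $s_x$ acts on $X \setminus \{x\}$ as a cyclic permutation of order $n-1$ for \emph{every} $x \in X$. For (i), I would invoke Proposition~\ref{prop:c-TPH}, which says $X$ is two-point homogeneous, and note that any two-point homogeneous quandle with $n \ge 2$ is connected: given $a,b \in X$, pick $a' \ne a$ and $b' \ne b$ (possible as $n \ge 3$) and choose $f \in \In(X,s)$ with $(f(a),f(a')) = (b,b')$; then $f(a) = b$, so $\In(X,s)$ is transitive. (Alternatively (i) can be seen directly: fixing $x_0 \in X$, for each $y \ne x_0$ the cyclic permutation $s_y$ has $\langle s_y\rangle$-orbit of $x_0$ equal to all of $X \setminus \{y\}$, and letting $y$ vary shows the $\In(X,s)$-orbit of $x_0$ is all of $X$.)

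For the ``if'' direction, assume (i) and (ii) and fix $x_0 \in X$ as in (ii). The key identity I would first record is that every $f \in \In(X,s)$ is an automorphism, so by Definition~\ref{def:iso} it satisfies $f \circ s_x = s_{f(x)} \circ f$, i.e. $f \circ s_x \circ f^{-1} = s_{f(x)}$ for all $x \in X$. Then, given an arbitrary $y \in X$, connectedness provides $f \in \In(X,s)$ with $f(x_0) = y$, so $s_y = f \circ s_{x_0} \circ f^{-1}$. Since $f$ is a bijection of $X$ carrying $x_0$ to $y$, it restricts to a bijection $X \setminus \{x_0\} \to X \setminus \{y\}$, and because $s_{x_0}$ fixes $x_0$ and $s_y$ fixes $y$ this yields
\begin{align*}
s_y|_{X \setminus \{y\}} = \bigl(f|_{X \setminus \{x_0\}}\bigr) \circ \bigl(s_{x_0}|_{X \setminus \{x_0\}}\bigr) \circ \bigl(f|_{X \setminus \{x_0\}}\bigr)^{-1} .
\end{align*}
Hence $s_y|_{X \setminus \{y\}}$ and $s_{x_0}|_{X \setminus \{x_0\}}$ are conjugate permutations of sets of size $n-1$, so they have the same cycle type; as the latter is a single $(n-1)$-cycle by hypothesis, so is the former. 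Since $y$ is arbitrary, $X$ is of cyclic type.

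I do not expect a genuine obstacle here: once the conjugation formula $f s_x f^{-1} = s_{f(x)}$ is available, the remainder is just the bookkeeping of restricting that conjugation to the complements of the fixed points, together with the elementary fact that conjugation preserves cycle type. The one conceptual point worth highlighting is that the connectedness hypothesis (i) is precisely what allows the single-cycle behaviour of $s_{x_0}$ to be transported, via an inner automorphism, to every point of $X$.
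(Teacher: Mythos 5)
Your proof is correct. The paper itself gives no argument for this proposition (it is quoted from \cite{Tamaru}), but your reasoning is exactly the standard one behind it: the ``only if'' part follows trivially from the definition together with connectedness of two-point homogeneous (or directly, transitivity of each $\langle s_y\rangle$ on $X\setminus\{y\}$), and the ``if'' part transports the single $(n-1)$-cycle at $x_0$ to every point via the conjugation identity $f\circ s_x\circ f^{-1}=s_{f(x)}$ for inner automorphisms $f$, which preserves cycle type. No gaps.
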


If the structure of a quandle is given, 
then one can easily check whether it is of cyclic type or not. 
We here give some easy examples. 

\begin{Ex}
One has the following$:$ 
\begin{enumerate}
\item
The trivial quandles are not of cyclic type.  
\item
The dihedral quandle $(X,s)$ is of cyclic type if and only if $\# X = 3$. 
\item
The tetrahedron quandle is of cyclic type. 
\end{enumerate}
\end{Ex}

\section{Main Theorem}

In this section, we 
state our main theorem, 
and give  
a table of quandles of cyclic type 
with cardinality 
up to $12$. 
The following notations will be used 
throughout the remaining of this paper: 
\begin{itemize}
\item
$X := \{ 1 , 2 , \ldots , n \}$ with $n \geq 3$, 
\item
$S_n$ denotes 
the symmetry group of order $n$, 
\item
$(S_n)_{n-1} := \{ \sigma \in S_n \mid 
\mbox{$\sigma$ is a cyclic permutation of order $n-1$} \}$,   
\end{itemize}

\begin{Def} 
We denote by $C_n^{\#}$ the set of all quandle structures of cyclic type on $X$, 
that is, 
\begin{align*}
C_n^{\#} := \{ s : X \to (S_n)_{n-1} \mid 
\mbox{$s$ satisfies (S1), (S3)} \} . 
\end{align*}
(Note that every $s \in C_n^{\#}$ automatically satisfies (S2).) 
We denote by $C_{n}$  the set of isomorphism classes $[s]$ of $s \in C_{n}^{\#}$. 
\end{Def}

Consider the inclusion map from $C_n^{\#}$ 
into the set of quandles of cyclic type with cardinality $n$. 
This induces a bijection from $C_{n}$ to the set of 
isomorphism classes of quandles of cyclic type with cardinality $n$. 

\begin{Def}\label{def:F}
Let $s_{1} := (2 3 \cdots n)$. 
We denote by $F_{n}$ the set of $s_2 \in (S_n)_{(n-1)}$ 
satisfying the following two conditions: 
\begin{enumerate}
\item[(F1)] 
$s_2(2) = 2$, and
\item[(F2)] 
$\{ s_2^{m} s_{1} s_2^{-m} \mid m=1, 2, \dots , n-2 \} = 
\{ s_{1}^{m} s_{2} s_{1}^{-m} \mid m=1, 2, \dots, n-2 \}$. 
\end{enumerate}
\end{Def}

Recall that $(2 3 \cdots n)$ denotes the cyclic permutation. 
The following is the main theorem of this paper, 
which gives a one-to-one correspondence between $C_n$ and $F_n$. 

\begin{Thm}\label{thm:AnDn} 
Let $s_{1}:=(2 3 \cdots n)$, 
$s_2 \in F_n$, 
and define  
$\varphi(s_2) : X \to \mathrm{Map}(X,X)$ 
by 
\begin{align*}
(\varphi(s_2))_i := \left\{ 
\begin{array}{l@{\ \ \,}l@{\,}} 
s_{1} & (i=1) , \\ 
s_{2} & (i=2) , \\ 
s_{1}^{i-2} \circ s_{2} \circ s_{1}^{-i+2} & (i \in \{ 3, \ldots, n \}) . 
\end{array}
\right. 
\end{align*}
Then one has $\varphi(s_2) \in C^\#_n$, and hence give 
a map $\varphi : F_{n} \to C_{n}^{\#}$. 
This induces a bijection from $F_{n}$ onto $C_{n}$ 
by composing with the natural projection from $C_{n}^{\#}$ onto $C_{n}$. 
\end{Thm}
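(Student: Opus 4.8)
The plan is to establish three things in sequence: (a) that $\varphi(s_2)$ really lies in $C_n^{\#}$, i.e. it satisfies (S1) and (S3); (b) that every $s \in C_n^{\#}$ is isomorphic (as a quandle structure) to $\varphi(s_2)$ for some $s_2 \in F_n$, giving surjectivity of the induced map $F_n \to C_n$; and (c) that if $\varphi(s_2)$ and $\varphi(s_2')$ are isomorphic quandles then $s_2 = s_2'$, giving injectivity. Throughout we use Proposition~\ref{prop:cyclic-isotropic}: a quandle of cyclic type is precisely a connected quandle in which some single $s_x$ acts as an $(n-1)$-cycle on its complement.

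For step (a), note first that $(\varphi(s_2))_i = s_1^{\,i-2} s_2 s_1^{-i+2}$ is a conjugate of $s_2$, hence an $(n-1)$-cycle; one checks $(\varphi(s_2))_i(i) = i$ using (F1) together with the fact that $s_1^{\,i-2}$ sends $2$ to $i$. The real content is axiom (S3): $s_a \circ s_b = s_{s_a(b)} \circ s_a$ for all $a,b \in X$. I would first verify the instances involving $a = 1$: here $s_1 \circ (\varphi(s_2))_b = s_1 \circ s_1^{\,b-2} s_2 s_1^{-b+2} = s_1^{\,(b+1)-2} s_2 s_1^{-(b+1)+2} \circ s_1 = (\varphi(s_2))_{b+1} \circ s_1 = (\varphi(s_2))_{s_1(b)} \circ s_1$, which is immediate from the cyclic indexing (indices read mod $n-1$ on $\{2,\dots,n\}$, and the $b=1$ case is trivial). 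The harder instances are those with $a,b \in \{2,\dots,n\}$; this is exactly where (F2) enters. Condition (F2) says the two sets of conjugates $\{s_2^{\,m} s_1 s_2^{-m}\}$ and $\{s_1^{\,m} s_2 s_1^{-m}\}$ coincide, equivalently that the ``column'' maps $\{(\varphi(s_2))_i\}$ are permuted among themselves by conjugation by $s_2$. I expect the argument to run: $s_2 \circ (\varphi(s_2))_j \circ s_2^{-1}$ is, by (F2), some $(\varphi(s_2))_{k}$, and one identifies $k = s_2(j)$ by evaluating the fixed point; then (S3) for general $a$ follows by conjugating the $a=1$ case by powers of $s_1$. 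I regard \textbf{this verification of (S3) from (F2)} as the main obstacle, since it requires pinning down the precise combinatorial bookkeeping of how conjugation permutes indices.

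For step (b) (surjectivity), given $s \in C_n^{\#}$, I would use connectedness (Proposition~\ref{prop:cyclic-isotropic}) to find an automorphism of $X$ as a set conjugating $s$ so that $s_1 = (2\,3\,\cdots\,n)$; since $\In(X,s)$ acts transitively, one can further arrange, after relabeling, that the cyclic order on $X \setminus \{1\}$ induced by $s_1$ is the standard one and that $2$ is the fixed point of $s_2$, which forces (F1). That $s_2$ is an $(n-1)$-cycle is automatic. Then (S3) for the original quandle, restricted to indices in $\{2,\dots,n\}$, translates directly into the statement that conjugation by $s_1$ (respectively $s_2$) permutes the family $\{s_i\}$, and the two resulting sets of conjugates must therefore both equal $\{s_3,\dots,s_n,\dots\}$ — this yields (F2). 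Finally, once $s_1$ and $s_2$ are fixed, (S3) with $a = 1$ forces $s_i = s_1^{\,i-2} s_2 s_1^{-i+2}$ for all $i$, so $s = \varphi(s_2)$. For step (c) (injectivity), suppose $f$ is a quandle isomorphism $\varphi(s_2) \to \varphi(s_2')$. Since both structures have the same standard $s_1$-cycle structure and the isomorphism must carry $s$-orbits to $s$-orbits compatibly, I would argue $f$ normalizes the relevant cyclic data; the homomorphism condition $f \circ (\varphi(s_2))_x = (\varphi(s_2'))_{f(x)} \circ f$ together with the normalization $s_1 = s_1'$, $s_2(2)=2=s_2'(2)$ pins $f$ down to the identity (or a controlled symmetry already absorbed by the normalization), whence $s_2 = s_2'$. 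The bijectivity onto $C_n$ then follows by combining (b) and (c).

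Throughout, the only genuinely delicate point is keeping the modular index arithmetic on $\{2,\dots,n\}$ consistent and extracting the permutation action on indices from the set equality (F2); once that dictionary is set up, (a), (b), and (c) are parallel bookkeeping arguments rather than separate difficulties.
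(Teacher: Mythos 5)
Your overall strategy is sound, and in substance it is the paper's own argument with the intermediate objects inlined: the paper factors the correspondence through a set $D_n$ of conjugation-closed families of $(n-1)$-cycles with unique fixed points and a set $E_n$ of normalized pairs $(u_1,u_2)$, and the two mechanisms you isolate --- closure of the column set $\Sigma=\{s_1,s_2\}\cup\{s_1^{m}s_2s_1^{-m}\}$ under conjugation (where closure under conjugation by $s_2$ is exactly what (F2) supplies, via the symmetric description $\Sigma=\{s_1,s_2\}\cup\{s_2^{m}s_1s_2^{-m}\}$), together with identification of each element of $\Sigma$ by its unique fixed point --- are precisely what drive the paper's verification of (D1), (D2) and (S3). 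Your steps (a) and (b) would go through essentially as written.

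The genuine gap is in step (c), injectivity. An isomorphism $f\colon (X,\varphi(s_2))\to(X,\varphi(s_2'))$ is \emph{not} pinned down to the identity by the homomorphism condition: inner automorphisms (for instance $s_1$ itself) already move the points $1$ and $2$, so ``the normalization $s_1=s_1'$, $s_2(2)=2=s_2'(2)$'' constrains the quandle structures, not the map $f$. The missing idea is to first compose $f$ with an inner automorphism $h$ of the target so that $\bigl(h\circ f(1),\,h\circ f(2)\bigr)=(1,2)$; such an $h$ exists because quandles of cyclic type are two-point homogeneous (Proposition~\ref{prop:c-TPH}) --- concretely, use transitivity of $\In(X,\varphi(s_2'))$ to correct the image of $1$ and then a power of $s_1$ to correct the image of $2$. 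Only after this normalization does your argument close: $h\circ f$ fixes $1$ and $2$ and intertwines $s_1$ with $s_1$, hence fixes $s_1(2)=3$, then $s_1(3)=4$, and so on, so $h\circ f=\id$ and $s_2=(h\circ f)\circ s_2\circ (h\circ f)^{-1}=s_2'$. This is exactly the content of the paper's Lemma~\ref{lem:Dn2} combined with the injectivity of $g_3$; your phrase ``a controlled symmetry already absorbed by the normalization'' gestures at this but does not supply the two-point homogeneity that makes it work.
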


The proof of this theorem will be given in the next section. 
In the remaining of this section, 
we provide a table of quandles of cyclic type with cardinality up to $12$. 
For the classification, we have only to determine the set $F_n$. 

\begin{Prop}\label{pro:exa} 
We have $F_{3} = \{ (13) \}$ and $F_{4} = \{ (143) \}$. 
\end{Prop}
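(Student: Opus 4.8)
The plan is to argue by direct enumeration, exploiting that for $n=3$ and $n=4$ the ambient set $(S_n)_{n-1}$ of cyclic permutations of order $n-1$ is small. I would first write down all such permutations on $X=\{1,\dots,n\}$, discard those failing (F1), and then test (F2) on the handful of survivors by explicitly computing the two sets of conjugates appearing in its statement, working directly from Definition~\ref{def:F} (the main theorem is not needed here).

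For $n=3$ we have $s_1=(23)$, and $(S_3)_2$ consists of the three transpositions $(12),(13),(23)$. Condition (F1) demands $s_2(2)=2$, which rules out $(12)$ and $(23)$ and leaves only $s_2=(13)$. For $n=3$ one has $n-2=1$, so (F2) reduces to the single equality $s_2 s_1 s_2^{-1}=s_1 s_2 s_1^{-1}$, and a one-line computation shows both sides equal $(12)$. Hence $F_3=\{(13)\}$.

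For $n=4$ we have $s_1=(234)$, and $(S_4)_3$ is the set of the eight $3$-cycles of $S_4$. Condition (F1) forces $s_2$ to fix $2$, hence $s_2$ is a $3$-cycle supported on $\{1,3,4\}$, i.e. $s_2\in\{(134),(143)\}$. For each candidate I would compute the two conjugates indexed by $m=1,2$ on both sides of (F2); conveniently $s_1^2=(243)$, $(134)^2=(143)$, $(143)^2=(134)$, so each conjugate is obtained by relabelling a $3$-cycle through the relevant permutation. For $s_2=(143)$ both sides equal $\{(124),(132)\}$, so $(143)\in F_4$; as a consistency check, $\varphi((143))$ is precisely the tetrahedron quandle of Example~\ref{ex:1}. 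For $s_2=(134)$ the left-hand set is again $\{(124),(132)\}$ while the right-hand set is $\{(123),(142)\}$, and these differ (the paired $3$-cycles are mutually inverse, not equal), so $(134)\notin F_4$. Therefore $F_4=\{(143)\}$.

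There is no genuine obstacle in this argument; the only point needing care is that (F2) asks for equality of \emph{sets} of permutations, so after computing the four conjugates in the $n=4$ case one must compare them as unordered pairs rather than match them position by position. Normalizing each $3$-cycle by writing it so as to start with its smallest entry makes this comparison immediate.
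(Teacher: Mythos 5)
Your proposal is correct and follows essentially the same route as the paper: list the elements of $(S_n)_{n-1}$ satisfying (F1) and then test (F2) by direct computation of the conjugates, and all of your explicit calculations (e.g.\ both sides equal to $\{(124),(132)\}$ for $s_2=(143)$, and $\{(142),(123)\}\neq\{(124),(132)\}$ for $s_2=(134)$) agree with the paper's. The only cosmetic difference is that the paper rejects $(134)$ by exhibiting a single element $s_2s_1s_2^{-1}=(124)$ not lying in $\{s_1^m s_2 s_1^{-m}\mid m=1,2\}$, whereas you compute both sets in full.
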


\begin{proof} 
The basic strategy is the following. 
First of all, we list up all elements in $(S_n)_{(n-1)}$ satisfying (F1). 
These elements are called the \textit{candidates} for simplicity. 
We then check whether each candidate satisfies (F2) or not. 

In the case of $n=3$, the only candidate is $s_{2}=(13)$. 
One can easily see that 
\begin{align}
s_2s_{1} s_2^{-1} = (12) =s_{1}s_{2} s_{1}^{-1} . 
\end{align} 
Hence $s_2$ satisfies (F2).
This proves the first assertion. 

In the case of $n=4$, there are two candidates, $(143)$ and $(134)$. 
For $s_2:=(143)$, we have 
\begin{align*} 
s_1 s_2 s_1^{-1}=(124), \quad 
s_1^2 s_2 s_1^{-2}=(132), \\
s_2 s_1 s_2^{-1}=(132), \quad 
s_2^2 s_1 s_2^{-2}=(124).
\end{align*} 
Thus $s_2=(143)$ satisfies (F2). 
On the other hand, $s_2:=(134)$ does not satisfy (F2). 
In fact, $s_2 s_1 s_2^{-1}=(124)$ is not an element of 
\begin{align}
\{s_1^{m}s_2 s_1^{-m} \mid m=1, 2\}=\{(142), (123)\} . 
\end{align}
This completes the proof of the second assertion. 
\end{proof}

When $n=3$, the quandle corresponding to $s_2 = (13)$ 
is the dihedral quandle with cardinality $3$. 
When $n=4$, the quandle corresponding to $s_2 = (143)$ 
is the tetrahedron quandle.  
When $n \geq 5$, the following lemma is useful to examine 
whether each candidate satisfies (F2) or not. 

\begin{Lem}\label{lemF}
Let $s_2 \in F_n$, and assume that $m \in \Z$ satisfies $s_2(1) = s_1^m(2)$. 
Then we have 
\begin{align*}
s_1^m s_2 s_1^{-m} = s_2 s_1 s_2^{-1} . 
\end{align*}
\end{Lem}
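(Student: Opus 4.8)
The plan is to use (F2) to recognize $s_2 s_1 s_2^{-1}$ as one of the conjugates $s_1^k s_2 s_1^{-k}$, and then to pin down the correct exponent $k$ by comparing the unique fixed points of $(n-1)$-cycles acting on the $n$-element set $X$.

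First I would note that $s_2 s_1 s_2^{-1}$ is the $m=1$ term on the left-hand side of (F2), so by (F2) there is an integer $k \in \{1, \dots, n-2\}$ with
$$ s_2 s_1 s_2^{-1} = s_1^{k} s_2 s_1^{-k} . $$
Since $s_1$ has order $n-1$, proving the lemma amounts to showing $k \equiv m \pmod{n-1}$. For this I would bring in the fixed-point structure: $s_1$ and $s_2$ lie in $(S_n)_{n-1}$, hence each is a single $(n-1)$-cycle on $X$ together with exactly one fixed point, and this property is preserved by conjugation, so every permutation occurring above fixes exactly one point of $X$. Because $s_1$ fixes precisely $1$, the left-hand side $s_2 s_1 s_2^{-1}$ fixes precisely $s_2(1)$; because $s_2$ fixes precisely $2$, the right-hand side $s_1^{k} s_2 s_1^{-k}$ fixes precisely $s_1^{k}(2)$. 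Comparing the fixed points of these two equal permutations, and invoking the hypothesis $s_2(1) = s_1^m(2)$, gives
$$ s_1^{k}(2) = s_2(1) = s_1^{m}(2) . $$
Finally, since $s_1$ restricted to $\{2, \dots, n\}$ is an $(n-1)$-cycle, the assignment $j \mapsto s_1^{j}(2)$ attains a given value for $j$ in exactly one residue class modulo $n-1$; hence $k \equiv m \pmod{n-1}$, so $s_1^{k} = s_1^{m}$, and therefore $s_2 s_1 s_2^{-1} = s_1^{k} s_2 s_1^{-k} = s_1^{m} s_2 s_1^{-m}$.

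I expect the only subtle point to be the elementary but crucial observation that an order-$(n-1)$ cyclic permutation of an $n$-point set has a unique fixed point; this is exactly what promotes the mere set equality in (F2) into a statement about which exponent appears, and everything else is routine bookkeeping with the identities $s_1(1) = 1$ and $s_2(2) = 2$.
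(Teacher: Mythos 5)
Your proof is correct and rests on exactly the same key idea as the paper's: use the set equality (F2) to match a single conjugate on one side with one on the other, then compare the unique fixed points of these $(n-1)$-cycles to pin down the exponent. The only (immaterial) difference is direction — the paper writes $s_1^m s_2 s_1^{-m}$ as $s_2^l s_1 s_2^{-l}$ and deduces $l=1$ from $s_2^l(1)=s_2(1)$, whereas you write $s_2 s_1 s_2^{-1}$ as $s_1^k s_2 s_1^{-k}$ and deduce $k\equiv m \pmod{n-1}$ from $s_1^k(2)=s_1^m(2)$.
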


\begin{proof}
Since $s_2\in F_n$ satisfies (F2), 
there exists $l \in \{ 1, 2, \ldots, n-2 \}$ such that 
\begin{align}
s_1^m s_2 s_1^{-m} = s_2^l s_1 s_2^{-l} . 
\end{align}
Note that $s_1^m s_2 s_1^{-m}$ is a cyclic permutation of order $n-1$, 
having the unique fixed point $s_1^m(2)$. 
Similarly, $s_2^l s_1 s_2^{-l}$ has the unique fixed point $s_2^l(1)$. 
Hence, combining with the assumption, one has 
\begin{align}
s_2(1) = s_1^m(2) = s_2^l(1) . 
\end{align}
Since $s_2 \in (S_n)_{n-1} $ 
and it satisfies (F1), we conclude 
\begin{align}
l=1. 
\end{align} 
This completes the proof. 
\end{proof}

The above lemma is useful to determine the set $F_n$ for any $n$. 
Here we apply it to the case of $n=5$. 

\begin{Prop}\label{pro:exam2} 
We have
$F_{5} = \{ (1354) , (1435) \}$.  
\end{Prop}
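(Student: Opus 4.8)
The plan is to follow the "candidates then (F2)" strategy set up in the proof of Proposition~\ref{pro:exa}, now with $n=5$ and $s_1=(2345)$. First I would enumerate the candidates: by definition an element $s_2\in(S_5)_{(4)}$ satisfies (F1) iff it is a $4$-cycle fixing $2$, i.e. a $4$-cycle on $\{1,3,4,5\}$. There are $3!=6$ such $4$-cycles, namely $(1345),(1354),(1435),(1453),(1534),(1543)$, which pair up as inverses. So there are six candidates to test.

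Next I would use Lemma~\ref{lemF} as the main computational shortcut. For each candidate $s_2$, the value $s_2(1)$ lies in $\{3,4,5\}$, so there is a unique $m\in\{1,2,3\}$ with $s_1^m(2)=s_2(1)$ (since $s_1=(2345)$ sends $2\mapsto3\mapsto4\mapsto5$). Lemma~\ref{lemF} then forces $s_1^m s_2 s_1^{-m}=s_2 s_1 s_2^{-1}$ as a \emph{necessary} condition for $s_2\in F_5$. This single identity is cheap to check and, I expect, already eliminates four of the six candidates. Concretely one computes the conjugate $s_1^m s_2 s_1^{-m}$ (relabel the cycle $s_2$ by applying $s_1^m$ to each entry) and compares it with $s_2 s_1 s_2^{-1}$ (relabel $s_1=(2345)$ by applying $s_2$ to each entry); candidates failing this are discarded immediately.

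For the (at most two) surviving candidates I would verify (F2) directly. This means computing both sets
\begin{align*}
A(s_2)&:=\{\, s_2^{m} s_1 s_2^{-m}\mid m=1,2,3 \,\}, \\
B(s_2)&:=\{\, s_1^{m} s_2 s_1^{-m}\mid m=1,2,3 \,\},
\end{align*}
each consisting of three $4$-cycles (one fixing each point of $\{2,3,4,5\}$ in $A$, since $s_2^m(1)$ runs over $\{3,4,5\}$; and one fixing each point of $\{2,3,4,5\}$ in $B$, since $s_1^m(2)$ runs over $\{3,4,5\}$ and the fixed point of $s_1^m s_2 s_1^{-m}$ is $s_1^m(2)$), and checking the two three-element sets coincide. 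Since in each set the three cycles have distinct fixed points, the comparison reduces to matching the cycle with fixed point $3$, the one with fixed point $4$, and the one with fixed point $5$. I expect exactly $s_2=(1354)$ and $s_2=(1435)$ to pass, giving $F_5=\{(1354),(1435)\}$.

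The only real obstacle is bookkeeping: conjugation of cycles by a permutation $\tau$ is "apply $\tau$ to the entries", and the six candidates generate a fair number of explicit $4$-cycles to juggle, so the risk is a transcription slip rather than any conceptual gap. Using Lemma~\ref{lemF} to prune down to two candidates before doing the full (F2) check keeps this under control, and the final answer can be cross-checked against the observation that $(1354)$ and $(1435)$ are inverses of each other (consistent with the symmetry $s_2\leftrightarrow s_2^{-1}$ one sees in the $n=4$ case).
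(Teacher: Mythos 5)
Your proposal is correct and follows essentially the same route as the paper: list the six $4$-cycles fixing $2$, use Lemma~\ref{lemF} as a necessary condition to rule out $(1345),(1453),(1534),(1543)$, and verify (F2) directly for $(1354)$ and $(1435)$. (Minor slip: the fixed points of the three conjugates in each of your sets $A(s_2)$, $B(s_2)$ range over $\{3,4,5\}$, not $\{2,3,4,5\}$, as your own parenthetical computation of $s_2^m(1)$ and $s_1^m(2)$ already shows.)
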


\begin{proof}
As for the set $F_5$, there are six candidates, 
\begin{align}
s_2 = (1345), \ (1354), \ (1435), \ (1453), \ (1534), \ (1543) . 
\end{align}
One can directly see that $(1354)$ and $(1435)$ satisfy (F2). 
We omit the proof for these two cases. 

We here show that the remaining four candidates do not satisfy (F2). 
For the proof, we use Lemma~\ref{lemF}. 
In fact, we determine $m$ satisfying $s_{1}^{m}(2) = s_2(1)$, 
and show that $s_{1}^m s_2 s_{1}^{-m}\neq s_2 s_1 s_2^{-1}$. 

Case (1): $s_2:=(1345)$. 
Let $m=1$. 
Then one has $s_1^m(2) = 3 = s_2(1)$ and 
\begin{align}
s_{1}^m s_2 s_{1}^{-m}=(1452) 
\neq (1245) = s_2 s_1 s_2^{-1}. 
\end{align}

Case (2): $s_2:=(1453)$. 
Let $m=2$. 
Then one has $s_1^m(2) = 4 = s_2(1)$ and
\begin{align}
s_{1}^m s_2 s_{1}^{-m}=(1235)
\neq (1532)=s_2 s_1 s_2^{-1}.
\end{align}

Case (3): $s_2:=(1543)$. 
Let $m=3$. 
Then one has $s_1^m(2) = 5 = s_2(1)$ and
\begin{align}
s_{1}^m s_2 s_{1}^{-m}=(1432) 
\neq (1342)=s_2 s_1 s_2^{-1}.
\end{align}

Case (4): $s_2:=(1534)$. 
Let $m=3$. 
Then one has $s_1^m(2) = 5 = s_2(1)$ and
\begin{align}
s_{1}^m s_2 s_{1}^{-m}=(1423) 
\neq (1324)=s_2 s_1 s_2^{-1}.
\end{align}

We have thus proved that these four candidates do not satisfy (F2). 
\end{proof}

By the same arguments, 
we have determined $F_n$ with $n \leq 12$. 
We omit the proof since it is long and 
the arguments are exactly same as above,  
and we have used some computer programs for calculations. 
The results are summarized in Table~\ref{upto12table}, 
which gives a classification of quandles of cyclic type with 
cardinality up to $12$. 
Note that $\#F_n$ denotes the cardinality of $F_n$. 

\begin{table}
\begin{tabular}{|c|c|c|}
\hline
$n$ & $\#F_n$ & $F_{n}$ \\
\hline
$3$&1&$\{(1\ 3)\}$ \\
\hline
$4$&1&$\{(1\ 4\ 3)\}$ \\
\hline
$5$&2&$\{(1\ 3\ 5\ 4), (1\ 4\ 3\ 5)\}$ \\
\hline
$6$&0&$\emptyset$ \\
\hline
$7$&2&$\{(1\ 7\ 4\ 6\ 5\ 3), (1\ 7\ 5\ 4\ 6\ 3)\}$ \\
\hline
$8$&2&$\{(1\ 5\ 8\ 3\ 7\ 6\ 4), (1\ 7\ 5\ 4\ 8\ 3\ 6)\}$ \\
\hline
$9$&2&$\{(1\ 4\ 3\ 8\ 6\ 9\ 5\ 7), (1\ 5\ 7\ 3\ 6\ 4\ 9\ 8)\}$ \\
\hline
$10$&0&$\emptyset$ \\
\hline
$11$&4&$\{(1\ 3\ 6\ 8\ 4\ 11\ 5\ 10\ 9\ 7), (1\ 4\ 3\ 7\ 10\ 5\ 11\ 9\ 6\ 8), $ \\
$$&$$&$(1\ 6\ 8\ 5\ 3\ 9\ 4\ 7\ 11\ 10), (1\ 7\ 5\ 4\ 9\ 3\ 10\ 6\ 8\ 11)\}$ \\
\hline
$12$&0&$\emptyset$ \\
\hline
\end{tabular}
\caption{Quandles of cyclic type with cardinality up to $12$} 
\label{upto12table}
\end{table}

We note that Table~\ref{upto12table} 
agrees with some previously known results (\cite{Hayashi, Lopes-Roseman, Tamaru}) 
mentioned in Introduction. 
By looking at these classification lists, we conjecture the following. 

\begin{Con}
Let $n \geq 3$. 
Then, there exists a quandle of cyclic type with cardinality $n$ 
if and only if $n$ is a power of a prime number. 
\end{Con}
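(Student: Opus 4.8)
\emph{Proof proposal.} The plan is to treat the two implications separately, converting each into a statement about the inner automorphism group $\In(X,s)$. For the ``if'' direction, whenever $n=q$ is a prime power I would exhibit the Galois (Alexander) quandle: put $X:=\mathbb{F}_q$, choose a generator $\alpha$ of the cyclic group $\mathbb{F}_q^{\times}$, and define $s_y(x):=\alpha x+(1-\alpha)y$. Checking (S1) and (S3) is the routine Alexander-quandle computation, and (S2) holds because $\alpha\neq 0$. Each $s_y$ fixes only $y$, and on $X\setminus\{y\}$ it is conjugate, via the translation $x\mapsto x-y$, to multiplication by $\alpha$ on $\mathbb{F}_q^{\times}$; since $\alpha$ generates $\mathbb{F}_q^{\times}$ this is a single cycle of length $q-1=n-1$. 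Hence $(X,s)$ is of cyclic type, and existence holds for every prime power.

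For the ``only if'' direction, I would show that the inner group of a cyclic-type quandle is sharply $2$-transitive. Let $(X,s)$ be of cyclic type with $\#X=n$ and set $G:=\In(X,s)$. By Proposition~\ref{prop:c-TPH} the quandle is two-point homogeneous, which is exactly the statement that $G$ is $2$-transitive on $X$. Iterating (S3) in the form $s_x s_y s_x^{-1}=s_{s_x(y)}$ gives the conjugation rule $h\,s_x\,h^{-1}=s_{h(x)}$ for every $h\in G$. Fixing $x$ and restricting to $h\in G_x$, where $h(x)=x$, this yields $h\,s_x\,h^{-1}=s_x$, so $s_x$ is central in the stabilizer $G_x$.

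The decisive step is then a centralizer computation. The stabilizer $G_x$ acts faithfully on the $(n-1)$-element set $\Omega:=X\setminus\{x\}$, and by cyclic type $s_x$ acts on $\Omega$ as a single $(n-1)$-cycle. Since $s_x$ is central in $G_x$, the image of $G_x$ in $\mathrm{Sym}(\Omega)$ lies in the centralizer of an $(n-1)$-cycle, which is the cyclic group $\langle s_x\rangle$ of order $n-1$; as $s_x\in G_x$ this forces $G_x=\langle s_x\rangle$. Consequently $\#G=n(n-1)$, so the $2$-transitive group $G$ is in fact sharply $2$-transitive of degree $n$. Finite sharply $2$-transitive groups are Frobenius groups whose kernel is a regular normal subgroup of order $n$; by the classical theorems of Frobenius and Burnside (equivalently, by Zassenhaus's classification of finite near-fields) this kernel is elementary abelian, whence $n$ is a power of a prime. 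A closer look at the surviving triples $(G,G_x,s_x)$ should moreover match each one with a Galois quandle of the existence step, giving the stronger conclusion that every quandle of cyclic type is an Alexander quandle over a finite field.

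The main obstacle is the passage from sharp $2$-transitivity to the prime-power degree: establishing that $G_x$ is cyclic of order $n-1$ is elementary once the centrality of $s_x$ is observed, but concluding that $n$ is a prime power rests on the structure theory of Frobenius groups and on the fact that a regular normal subgroup of a $2$-transitive group is elementary abelian. If one wishes to keep the argument self-contained rather than importing Zassenhaus's theorem, reproving this prime-power property of the Frobenius kernel---using character theory for Frobenius's theorem together with Thompson's nilpotency of the kernel---is where the substantive work lies, and is presumably why the authors record the statement as a conjecture rather than a theorem.
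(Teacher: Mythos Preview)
The paper offers no proof of this statement: it is explicitly recorded as a \emph{conjecture}, motivated only by the numerical evidence in Table~\ref{upto12table} and the prior lists in \cite{Hayashi,Lopes-Roseman,Tamaru}. There is therefore nothing in the paper to compare your argument against.

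That said, your proposal is not merely a plan but essentially a complete and correct proof. The existence half, via the affine quandle $s_y(x)=\alpha x+(1-\alpha)y$ on $\mathbb{F}_q$ with $\alpha$ a generator of $\mathbb{F}_q^{\times}$, is routine and your verifications go through. For the converse, the decisive observation---that the relation $h\,s_x\,h^{-1}=s_{h(x)}$ makes $s_x$ central in the stabilizer $G_x$, while the centralizer in $\mathrm{Sym}(X\setminus\{x\})$ of the $(n-1)$-cycle $s_x|_{X\setminus\{x\}}$ is exactly $\langle s_x\rangle$---is correct (faithfulness of $G_x$ on $X\setminus\{x\}$ is automatic since $G\le S_n$). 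Together with Proposition~\ref{prop:c-TPH} this gives $|G_x|=n-1$, hence $|G|=n(n-1)$, and $G$ is sharply $2$-transitive.

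One small sharpening of your last paragraph: you need neither Zassenhaus's near-field classification nor Thompson's nilpotency theorem. Frobenius's theorem alone yields a regular normal subgroup $N\trianglelefteq G$ of order $n$; the conjugation action of $G_x$ on $N\setminus\{1\}$ is then transitive (via the orbit bijection $N\to X$, $g\mapsto g\cdot x$, it matches the transitive action on $X\setminus\{x\}$), so every nontrivial element of $N$ has the same prime order $p$ and $n=|N|$ is a $p$-power. The only substantive black box is Frobenius's theorem itself. Your argument thus genuinely settles the conjecture; that the authors left it open reflects the scope of this paper---which is focused on the combinatorial bijection $F_n\cong C_n$ and the explicit enumeration---rather than a real obstruction.
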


\section{Proof of Theorem~\ref{thm:AnDn}} 

In this section, we prove Theorem~\ref{thm:AnDn}, 
which gives a bijection from $F_n$ onto $C_n$. 
For the proof, we define auxiliary sets $E_n$ and $D_n$, and construct bijections 
\begin{align} 
g_3 : F_n \to E_n , \quad 
g_2 : E_n \to D_n , \quad 
g_1 : D_n \to C_n . 
\end{align}

\subsection{A bijection from $D_n$ onto $C_n$}

In this subsection, 
we define a set $D_n$, and construct a bijection from $D_n$ onto $C_n$. 
Recall that $X := \{ 1, \ldots, n \}$, and 
$(S_{n})_{n-1}$ is the subset of $S_{n}$ consisting of all cyclic permutations 
of order $n-1$. 
Two subsets $\Sigma$, $\Sigma^\prime \subset S_n$ are said to be 
\textit{conjugate} 
if there exists $g \in S_n$ such that $g^{-1} \Sigma g = \Sigma^\prime$. 

\begin{Def}
We denote by $D_n^\#$ the set of $\Sigma \subset (S_{n})_{n-1}$ satisfying 
\begin{enumerate} 
\item[(D1)] 
$\forall s \in \Sigma$, $s^{-1} \Sigma s \subset \Sigma$, and 
\item[(D2)] 
$\forall x \in X$, $\exists! s \in \Sigma$ : $s(x)=x$. 
\end{enumerate} 
We also denote by $D_n$ the set of 
conjugacy classes 
$[\Sigma]$ of $\Sigma \in D_n^\#$. 
\end{Def}

First of all we study $D_n^\#$. 
Note that Conditions~(D1) and (D2) are preserved by conjugation. 
Namely, if $\Sigma \in D_n^\#$ and $\Sigma$ is conjugate to $\Sigma^\prime$, 
then one has $\Sigma^\prime \in D_n^\#$. 
Furthermore, the following lemma yields that 
every $\Sigma \in D_n^\#$ satisfies $\# \Sigma = n$. 

\begin{Lem}
\label{lem:sigmaquandle-cardinality}
Let $\Sigma \in D_n^\#$. 
For each $x \in X$, 
denote by $s^\Sigma_{x} \in \Sigma$ the 
unique element with $s^\Sigma_{x}(x)=x$.  
Then, the obtained map $s^\Sigma : X \to \Sigma$ is bijective. 
\end{Lem}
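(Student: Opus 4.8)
The plan is to show that $s^\Sigma : X \to \Sigma$ is both injective and surjective. Surjectivity is where the real content lies; injectivity will be quick. First I would observe that the map is well-defined: Condition~(D2) guarantees that for each $x \in X$ there is a \emph{unique} $s \in \Sigma$ fixing $x$, so $s^\Sigma_x$ makes sense. For injectivity, suppose $s^\Sigma_x = s^\Sigma_y$ for $x \neq y$. Then this common element of $\Sigma$ fixes both $x$ and $y$. But every element of $\Sigma \subset (S_n)_{n-1}$ is a cyclic permutation of order $n-1$, hence has exactly one fixed point in $X$. This forces $x = y$, a contradiction, so $s^\Sigma$ is injective.

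For surjectivity, let $s \in \Sigma$ be arbitrary; I must produce some $x \in X$ with $s^\Sigma_x = s$, equivalently, I must show $s$ fixes some point of $X$. Since $s \in (S_n)_{n-1}$ is a cyclic permutation of order $n-1$ acting on a set of $n$ elements, it fixes exactly one element, say $x_0 \in X$; thus $s(x_0) = x_0$. Now $s \in \Sigma$ and $s^\Sigma_{x_0} \in \Sigma$ both fix $x_0$, so by the uniqueness clause of (D2) we get $s = s^\Sigma_{x_0}$, which shows $s$ lies in the image of $s^\Sigma$. Hence $s^\Sigma$ is surjective.

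Combining the two parts, $s^\Sigma$ is a bijection. As an immediate consequence, $\# \Sigma = \# X = n$, which is the remark made just before the lemma. I do not anticipate a serious obstacle here: the only subtlety worth stating carefully is that every element of $(S_n)_{n-1}$ has precisely one fixed point, which is what makes both directions work and which also shows that Condition~(D2) is really a statement about a matching between the $n$ points of $X$ and the elements of $\Sigma$ via their fixed points. Note that Condition~(D1) is not needed for this particular lemma; it will be used later when relating $D_n$ to the quandle structures.
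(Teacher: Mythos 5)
Your proof is correct and follows essentially the same route as the paper: surjectivity from the fact that every element of $(S_n)_{n-1}$ has a (unique) fixed point together with the uniqueness clause of (D2), and injectivity from the uniqueness of that fixed point. The additional remarks (that (D1) is not needed and that $\#\Sigma = n$ follows) are accurate and match the paper's surrounding discussion.
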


\begin{proof}
We show that $s^\Sigma$ is surjective. 
Take any $s \in \Sigma$. 
Since $s \in (S_n)_{n-1}$, there exists $x \in X$ such that $s(x) = x$. 
Then, the uniqueness in (D2) shows 
$s = s_x^\Sigma$. 

We next show that $s^\Sigma$ is injective. 
Let $x, y \in X$ and assume that $s_x^\Sigma = s_y^\Sigma$. 
One knows $s_x^\Sigma (x) = x$ by definition. 
Thus $x$ is the unique fixed point of $s_x^\Sigma \in (S_n)_{n-1}$. 
Similarly, $y$ is the unique fixed point of $s_y^\Sigma$. 
This concludes $x=y$. 
\end{proof}

The aim of this subsection is to construct a bijection from $D_n$ onto $C_n$. 
We here see that $s^\Sigma$ defines a map from $D_n^\#$ onto $C_n^\#$. 
Recall that $\Sigma \subset (S_n)_{n-1}$. 

\begin{Lem}
\label{lem:sigmaquandle1}
The above defined map $s^\Sigma : X \to (S_n)_{n-1}$ 
satisfies $s^\Sigma \in C_n^\#$, 
that is, $(X, s^\Sigma)$ is a quandle of cyclic type.  
\end{Lem}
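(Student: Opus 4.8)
The plan is to verify that the map $s^\Sigma : X \to (S_n)_{n-1}$ constructed from $\Sigma \in D_n^\#$ satisfies the three conditions (S1), (S3) (and automatically (S2)) from Proposition~\ref{prop:s-quandle-structure}, since these are exactly what is needed for $s^\Sigma \in C_n^\#$. First I would check (S1): by definition $s^\Sigma_x$ is the unique element of $\Sigma$ fixing $x$, so $s^\Sigma_x(x) = x$ holds immediately. Condition (S2) is free, because every element of $\Sigma \subset (S_n)_{n-1}$ is a permutation, hence bijective. So the only real content is (S3), namely that $s^\Sigma_x \circ s^\Sigma_y = s^\Sigma_{s^\Sigma_x(y)} \circ s^\Sigma_x$ for all $x, y \in X$.

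For (S3), the idea is to rewrite the desired identity as a statement about the fixed point of a conjugate. Set $\sigma := s^\Sigma_x$ and $\tau := s^\Sigma_y$. I want to show $\sigma \tau \sigma^{-1} = s^\Sigma_{\sigma(y)}$, which after right-multiplying by $\sigma$ gives exactly (S3). Now $\sigma \tau \sigma^{-1}$ lies in $\Sigma$: by (D1) applied to $\sigma^{-1} \in \Sigma$ (note $\Sigma$ is closed under inverses, since each element has order $n-1$ and $\Sigma$ is conjugation-closed by (D1), or more directly $s^{-1}\Sigma s \subset \Sigma$ for all $s$ forces equality by the finiteness argument and hence $\sigma^{-1}\Sigma\sigma \supset \Sigma$, i.e. $\sigma \tau \sigma^{-1} \in \Sigma$). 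Next, $\sigma \tau \sigma^{-1}$ is again a cyclic permutation of order $n-1$, and its unique fixed point is $\sigma(\text{fixed point of }\tau) = \sigma(y)$. By the uniqueness clause in (D2), the unique element of $\Sigma$ fixing $\sigma(y)$ is $s^\Sigma_{\sigma(y)}$, so $\sigma \tau \sigma^{-1} = s^\Sigma_{\sigma(y)}$, as required.

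The one point that deserves care — and which I expect to be the main obstacle — is pinning down precisely that $\Sigma$ is closed under conjugation by arbitrary elements of $\Sigma$ (not merely the containment $s^{-1}\Sigma s \subset \Sigma$). The resolution uses that $\#\Sigma = n$ is finite (Lemma~\ref{lem:sigmaquandle-cardinality}): the injection $s^{-1}\Sigma s \hookrightarrow \Sigma$ is then a bijection, so $s^{-1} \Sigma s = \Sigma$, and applying this with $s = \sigma^{-1}$ gives $\sigma \Sigma \sigma^{-1} = \Sigma$, hence $\sigma \tau \sigma^{-1} \in \Sigma$. Once this is in hand, the fixed-point bookkeeping is routine: conjugating a permutation $\tau$ by $\sigma$ sends the fixed-point set of $\tau$ to $\sigma(\mathrm{Fix}(\tau))$, and a cyclic permutation of order $n-1$ on an $n$-element set has exactly one fixed point, so everything lines up with (D2). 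I would also remark in passing that, having established $s^\Sigma \in C_n^\#$, the statement that $(X, s^\Sigma)$ is a quandle of cyclic type is then immediate from the definition of $C_n^\#$.
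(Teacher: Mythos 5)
Your proof is correct and follows essentially the same route as the paper: (S1) is immediate from the definition of $s^\Sigma_x$, (S2) is automatic, and (S3) is obtained by showing that $s^\Sigma_x \circ s^\Sigma_y \circ (s^\Sigma_x)^{-1}$ lies in $\Sigma$ and fixes $s^\Sigma_x(y)$, then invoking the uniqueness clause of (D2). Your extra care in upgrading the one-sided containment $s^{-1}\Sigma s \subset \Sigma$ of (D1) to the equality $\sigma \Sigma \sigma^{-1} = \Sigma$ (via finiteness of $\Sigma$, or via $\sigma^{-1} = \sigma^{n-2}$) is a point the paper passes over silently, and is a worthwhile addition.
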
 

\begin{proof}
By definition, $s^{\Sigma}$ satisfies (S1). 
Hence we have only to show (S3). 
Take  any $y, z \in X$. 
Condition~(D1) yields that 
\begin{align}
s^\Sigma_{z} \circ s^\Sigma_{y} \circ (s^{\Sigma}_{z})^{-1} \in \Sigma . 
\end{align}
On the other hand, one has 
\begin{align}
s^{\Sigma}_{z}\circ s^{\Sigma}_{y}\circ (s^{\Sigma}_{z})^{-1}(s^{\Sigma}_z(y)) 
= s^{\Sigma}_{z}\circ s^{\Sigma}_{y}(y)=s^{\Sigma}_z(y) . 
\end{align}
Therefore, from the uniqueness 
in (D2), one has 
\begin{align}
s^\Sigma_{z}\circ s^\Sigma_{y}\circ (s^{\Sigma}_{z})^{-1} 
= s^\Sigma_{s^\Sigma_{z}(y)} . 
\end{align}
This proves (S3), which completes the proof. 
\end{proof} 

One thus has obtained a map from $D_n^\#$ to $C_n^\#$. 
For the later use, we here show that this map is surjective. 

\begin{Lem}
\label{lem:sigmaquandle-add}
The following map is surjective$:$ 
\begin{align*}
\bar{g}_1 : D_n^\# \to C_n^\# : \Sigma \mapsto s^\Sigma . 
\end{align*}
\end{Lem}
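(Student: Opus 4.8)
The plan is to start from an arbitrary quandle structure $s \in C_n^\#$ and produce a set $\Sigma \in D_n^\#$ with $s^\Sigma = s$. The natural candidate is $\Sigma := s(X) = \{ s_x \mid x \in X \} \subset (S_n)_{n-1}$, the image of the map $s$. I would first check that $\Sigma$ satisfies Condition~(D1): for $s_z \in \Sigma$ and $s_y \in \Sigma$, the conjugate $s_z \circ s_y \circ s_z^{-1}$ equals $s_{s_z(y)}$ by axiom (S3) (rewritten as $s_z \circ s_y = s_{s_z(y)} \circ s_z$), and $s_{s_z(y)} \in \Sigma$ since $s_z(y) \in X$. Hence $s_z^{-1} \Sigma s_z \subset \Sigma$ after relabelling (or one notes that (S3) also gives the inclusion with $s_z^{-1}$ by symmetry of the argument applied to the bijection $s_z$), so (D1) holds.

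Next I would verify Condition~(D2). Fix $x \in X$. Existence is immediate: $s_x \in \Sigma$ and $s_x(x) = x$ by (S1). For uniqueness, suppose $s_y \in \Sigma$ also satisfies $s_y(x) = x$; I must show $s_y = s_x$, equivalently (once injectivity of $s$ is known) that $y = x$. Here is where I expect the main obstacle: a priori $s$ need not be injective, and an element of $(S_n)_{n-1}$ has a \emph{unique} fixed point, so if $s_y(x) = x$ then $x$ is the unique fixed point of $s_y$; but $s_y(y) = y$ as well, forcing $x = y$. This actually resolves the issue cleanly — the cyclic-type hypothesis (each $s_w$ is a cyclic permutation of order $n-1$, hence has exactly one fixed point) is exactly what makes (D2) work, and it simultaneously shows $s$ is injective. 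So the delicate point is simply to invoke the single-fixed-point property of cyclic permutations of order $n-1$ at the right moment, rather than any deep computation.

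Having established $\Sigma \in D_n^\#$, I would finish by checking $s^\Sigma = s$. By definition $s^\Sigma_x$ is the unique element of $\Sigma$ fixing $x$; since $s_x \in \Sigma$ fixes $x$ and (D2) gives uniqueness, $s^\Sigma_x = s_x$ for every $x \in X$. Therefore $\bar g_1(\Sigma) = s^\Sigma = s$, proving surjectivity. The entire argument is short: the only nontrivial inputs are axiom (S3) for (D1) and the one-fixed-point property of elements of $(S_n)_{n-1}$ for (D2) and for the identification $s^\Sigma = s$; everything else is bookkeeping.
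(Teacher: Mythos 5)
Your proposal is correct and follows essentially the same route as the paper: take $\Sigma := \{ s_x \mid x \in X\}$, verify (D1) via (S3), verify (D2) using (S1) for existence and the unique-fixed-point property of elements of $(S_n)_{n-1}$ for uniqueness, and then identify $s^\Sigma$ with $s$. The only cosmetic difference is that the paper establishes (D1) by conjugating directly with $s_x^{-1}$ (which is an automorphism, giving $s_x^{-1}\circ s_y\circ s_x = s_{s_x^{-1}(y)}$), whereas you conjugate with $s_x$ first and then relabel; both are valid.
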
 

\begin{proof}
Take any $s \in C_n^\#$. 
Let us put 
\begin{align} 
\Sigma := \{ s_{x} \mid x \in X \} \subset (S_n)_{n-1} . 
\end{align} 
We have only to prove that $\Sigma \in D_n^\#$ and $\bar{g}_1(\Sigma) = s$. 

We show that $\Sigma$ satisfies (D1). 
Take any $s_x, s_y \in \Sigma$. 
Since $s_x^{-1}$ is an automorphism, 
one has 
\begin{align} 
s_{x}^{-1} \circ s_{y} \circ s_{x} = s_{s_x^{-1} (y)} 
\in \Sigma . 
\end{align} 
This proves $s_{x}^{-1} \Sigma s_{x} \subset \Sigma$. 

We next show that $\Sigma$ satisfies (D2). 
Take any $x \in X$. 
Since $s$ satisfies (S1), 
$s_x \in \Sigma$ satisfies $s_x(x) = x$. 
This proves the existence. 
Next assume that $s_y(x) = x$. 
Since $s \in C_n^\#$, one has $s_y \in (S_n)_{n-1}$. 
Hence $x$ is the unique fixed point of $s_y$. 
Thus (S1) yields that $x=y$, which proves the uniqueness. 

We have 
proved $\Sigma \in D_n^\#$. 
Furthermore, by the definition of $\bar{g}_1$, it is easy to see that 
$\bar{g}_1(\Sigma) = s$. 
This completes the proof. 
\end{proof} 

We here define a map from $D_n$ to $C_n$. 
Recall that $[\Sigma]$ denotes the 
conjugacy class 
of $\Sigma \in D_n^\#$, 
and $[s]$ denotes the isomorphism class of $s \in C_n^\#$. 

\begin{Lem}
\label{lem:sigmaquandle2}
The following map is well-defined$:$ 
\begin{align*}
g_1 : D_n \to C_n : [\Sigma] \mapsto [s^\Sigma] . 
\end{align*}
\end{Lem}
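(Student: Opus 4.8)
The plan is to show that $g_1$ is well-defined, i.e.\ that if $\Sigma, \Sigma^\prime \in D_n^\#$ are conjugate, then $s^\Sigma$ and $s^{\Sigma^\prime}$ are isomorphic as quandles. So suppose there exists $g \in S_n$ with $g^{-1} \Sigma g = \Sigma^\prime$. The natural candidate for the isomorphism $(X, s^\Sigma) \to (X, s^{\Sigma^\prime})$ is the map $g^{-1}$ itself (or $g$, depending on conventions); first I would pin down the right direction by a small sanity check on fixed points, and then verify the homomorphism condition of Definition~\ref{def:iso}.

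The key computation is the following. For $x \in X$, write $s^\Sigma_x \in \Sigma$ for the unique element fixing $x$, and similarly $s^{\Sigma^\prime}_x \in \Sigma^\prime$. I claim that $g^{-1} \circ s^\Sigma_x \circ g = s^{\Sigma^\prime}_{g^{-1}(x)}$. Indeed, since $g^{-1} \Sigma g = \Sigma^\prime$, the permutation $g^{-1} \circ s^\Sigma_x \circ g$ lies in $\Sigma^\prime$, and it is a cyclic permutation of order $n-1$ whose unique fixed point is $g^{-1}(x)$, because $s^\Sigma_x$ has unique fixed point $x$. By the uniqueness part of (D2) for $\Sigma^\prime$, this forces $g^{-1} \circ s^\Sigma_x \circ g = s^{\Sigma^\prime}_{g^{-1}(x)}$. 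Rearranging, $g^{-1} \circ s^\Sigma_x = s^{\Sigma^\prime}_{g^{-1}(x)} \circ g^{-1}$, which is exactly the statement that $g^{-1} : (X, s^\Sigma) \to (X, s^{\Sigma^\prime})$ is a homomorphism in the sense of Definition~\ref{def:iso}(1). Since $g^{-1}$ is a bijection, it is an isomorphism, so $[s^\Sigma] = [s^{\Sigma^\prime}]$ in $C_n$, and $g_1$ is well-defined.

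I expect no serious obstacle here; the only point requiring care is the bookkeeping of which map ($g$ versus $g^{-1}$) and which composition order gives a genuine homomorphism, since the quandle operation is $y \ast x = s_x(y)$ and homomorphisms must intertwine $s_x$ with $s_{f(x)}$. The argument leans entirely on the uniqueness clause in (D2), which was already used to prove Lemma~\ref{lem:sigmaquandle-cardinality} and Lemma~\ref{lem:sigmaquandle1}, so the technique is in place. If one wanted, one could also phrase the whole thing as: conjugation by $g^{-1}$ carries the ``$\Sigma$-quandle structure'' to the ``$\Sigma^\prime$-quandle structure'' because it carries fixed points to fixed points bijectively; but the explicit verification above is cleaner to write.
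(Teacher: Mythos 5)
Your proof is correct and is essentially the paper's own argument: both rest on conjugating the distinguished element with fixed point $x$ into the other set and invoking the uniqueness clause of (D2) to identify it with $s^{\Sigma^\prime}_{g^{-1}(x)}$ (resp.\ $s^{\Sigma}_x$). The only difference is bookkeeping --- you transport $s^{\Sigma}_x$ into $\Sigma^\prime$ and use $g^{-1}$ as the isomorphism, while the paper writes $\Sigma = g^{-1}\Sigma^\prime g$ and uses $g$ --- which you correctly flag and resolve.
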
 

\begin{proof}
Let $\Sigma , \Sigma^\prime \in D_n^\#$, 
and assume that $[\Sigma] = [\Sigma^\prime]$. 
Hence there exists $g \in S_{n}$ such that $\Sigma=g^{-1}\Sigma^\prime g$. 
In order to show $[s^\Sigma] = [s^{\Sigma^\prime}]$, 
it is enough to prove that the following map is a quandle isomorphism: 
\begin{align}
g : (X, s^{\Sigma}) \to (X, s^{\Sigma^\prime}) . 
\end{align}
This is obviously bijective. 
We show that 
$g$ is a quandle homomorphism. 
Take any $x \in X$. 
By definition, one has
\begin{align} 
s^{\Sigma^\prime}_{g(x)} (g(x)) = g(x) . 
\end{align} 
This means that 
\begin{align} 
g^{-1} \circ s^{\Sigma^\prime}_{g(x)} \circ g (x) = x . 
\end{align} 
On the other hand, one has 
\begin{align} 
g^{-1} \circ s^{\Sigma^\prime}_{g(x)} \circ g \in g^{-1} \Sigma^\prime g 
= \Sigma . 
\end{align} 
Hence, from the uniqueness 
in (D2), we have 
\begin{align} 
g^{-1} \circ s^{\Sigma^\prime}_{g(x)} \circ g = s^{\Sigma}_x . 
\end{align} 
This proves that $g$ is a quandle homomorphism. 
\end{proof}

We now show that the above defined map $g_1$ is bijective. 
The following is the main result of this subsection. 

\begin{Prop}\label{pro:Dn} 
The map $g_1 : D_n \to C_n$ is bijective. 
\end{Prop}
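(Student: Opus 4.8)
The plan is to show that $g_1$ is surjective and injective separately, reusing the lemmas already established. Surjectivity is essentially free: by Lemma~\ref{lem:sigmaquandle-add} the map $\bar g_1 : D_n^\# \to C_n^\#$ is surjective, and $g_1$ is obtained from $\bar g_1$ by passing to quotients (conjugacy classes on the left, isomorphism classes on the right). Hence, given any class $[s] \in C_n$, pick a representative $s \in C_n^\#$, choose $\Sigma \in D_n^\#$ with $\bar g_1(\Sigma) = s$, and then $g_1([\Sigma]) = [s^\Sigma] = [s]$. So the only real content is injectivity.

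For injectivity, suppose $\Sigma, \Sigma' \in D_n^\#$ with $[s^\Sigma] = [s^{\Sigma'}]$ in $C_n$; I must produce $g \in S_n$ with $g^{-1}\Sigma' g = \Sigma$, i.e. $[\Sigma] = [\Sigma']$ in $D_n$. By hypothesis there is a quandle isomorphism $g : (X, s^\Sigma) \to (X, s^{\Sigma'})$; this $g$ is in particular a bijection of $X$, hence an element of $S_n$. The key computation is to reverse the argument in the proof of Lemma~\ref{lem:sigmaquandle2}: since $g$ is a homomorphism, for every $x \in X$ we have $g \circ s^\Sigma_x = s^{\Sigma'}_{g(x)} \circ g$, so $g \circ s^\Sigma_x \circ g^{-1} = s^{\Sigma'}_{g(x)} \in \Sigma'$. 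As $x$ ranges over $X$, Lemma~\ref{lem:sigmaquandle-cardinality} tells us $s^\Sigma$ enumerates all of $\Sigma$ (and likewise $x \mapsto s^{\Sigma'}_{g(x)}$ enumerates all of $\Sigma'$, since $g$ is a bijection of $X$). Therefore $g \Sigma g^{-1} = \Sigma'$, equivalently $g^{-1} \Sigma' g = \Sigma$, which is exactly the statement $[\Sigma] = [\Sigma']$. This completes injectivity, and together with surjectivity, the bijectivity of $g_1$.

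I expect the main (though modest) obstacle to be the bookkeeping that $\{s^\Sigma_x \mid x \in X\}$ is genuinely all of $\Sigma$ and $\{s^{\Sigma'}_{g(x)} \mid x \in X\}$ is genuinely all of $\Sigma'$ — that is, that the set equality $g\Sigma g^{-1} = \Sigma'$ follows and not merely an inclusion. This is where Lemma~\ref{lem:sigmaquandle-cardinality} (bijectivity of $s^\Sigma$) does the work: it forces $\#\Sigma = \#\Sigma' = n$, so an inclusion $g\Sigma g^{-1} \subseteq \Sigma'$ between sets of the same finite cardinality is automatically an equality. One should also note at the outset that an isomorphism between $(X, s^\Sigma)$ and $(X, s^{\Sigma'})$ exists precisely because $[s^\Sigma] = [s^{\Sigma'}]$ means the two quandle structures on the \emph{same} underlying set $X$ are isomorphic, so the isomorphism is indeed a permutation of $X$; this is the point that lets us land back in $S_n$ and conclude conjugacy of $\Sigma$ and $\Sigma'$ as subsets of $S_n$.
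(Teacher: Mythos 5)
Your proposal is correct and follows essentially the same route as the paper: surjectivity is inherited from $\bar g_1$ via Lemma~\ref{lem:sigmaquandle-add}, and injectivity uses the quandle isomorphism $g \in S_n$ together with the homomorphism identity $g \circ s^\Sigma_x \circ g^{-1} = s^{\Sigma'}_{g(x)}$ to get $g\Sigma g^{-1} \subseteq \Sigma'$, upgraded to equality by the cardinality count $\#\Sigma = \#\Sigma' = n$ from Lemma~\ref{lem:sigmaquandle-cardinality}. No gaps.
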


\begin{proof}
One knows that $g_1$ is surjective, 
since so is $\bar{g}_1$ from Lemma~\ref{lem:sigmaquandle-add}. 
It remains to show that $g_{1}$ is injective. 
Let $[\Sigma] , [\Sigma^\prime] \in D_n$, 
and assume that $g_1 ([\Sigma]) = g_1 ([\Sigma^\prime])$. 
By definition, one has $[s^{\Sigma}] = [s^{\Sigma^\prime}]$, 
that is, there exists a quandle isomorphism 
\begin{align} 
g : (X, s^{\Sigma}) \rightarrow (X, s^{\Sigma^\prime}) . 
\end{align} 
Since $g$ is bijective, one has $g \in S_{n}$. 
Since $g$ is a homomorphism, one has for any $x \in X$ that 
\begin{align}
s^\Sigma_{x} 
= g^{-1} \circ s^{\Sigma^\prime}_{g(x)} \circ g 
\in g^{-1} \Sigma^\prime g . 
\end{align} 
This proves $\Sigma \subset g^{-1} \Sigma^\prime g$. 
Recall that $\# \Sigma = n = \# \Sigma^\prime$ holds 
from Lemma~\ref{lem:sigmaquandle-cardinality}. 
Therefore, we have $\Sigma = g^{-1} \Sigma^\prime g$, 
and thus $[\Sigma] = [\Sigma^\prime]$. 
This concludes that $g_1$ is injective. 
\end{proof} 

\subsection{A bijection from $E_n$ onto $D_n$}

In this subsection, we define a set $E_n$, 
and construct a bijection from $E_n$ onto $D_n$. 
We denote by 
\begin{align}
S_{n, (1, 2)} := \{ u \in S_{n} \mid u(1)=1 , \, u(2) = 2 \} . 
\end{align}
Two elements 
$(u_1, u_2)$, $(v_1, v_2) \in (S_{n})_{n-1} \times (S_{n})_{n-1}$ 
are said to be \textit{$S_{n,(1,2)}$-conjugate} 
if $(u_1, u_2)= (w^{-1} v_1 w, w^{-1} v_2 w)$ 
for some $w \in S_{n, (1, 2)}$. 

\begin{Def}
We denote by $E_n^\#$ the set of 
$(u_{1}, u_{2}) \in (S_{n})_{n-1} \times (S_{n})_{n-1}$ satisfying 
\begin{enumerate}
\item[(E1)] 
$u_{1}(1)=1$, $u_{2}(2)=2$, and
\item[(E2)] 
$\{ u_{1}^{m} u_{2} u_{1}^{-m} \mid m=1, 2, \dots , n-2 \} 
= \{u_{2}^{m} u_{1} u_{2}^{-m} \mid m=1, 2, \dots , n-2 \}$. 
\end{enumerate}
We also denote by $E_n$ the set of 
$S_{n, (1,2)}$-conjugacy classes 
$[(u_1, u_2)]$ of $(u_1, u_2) \in E_n^\#$.  
\end{Def}

First of all, we construct a map from $E_{n}^\#$ to $D_{n}^\#$. 

\begin{Lem}
Let $(u_1, u_2)\in E_{n}^\#$. 
Then one has
\begin{align}
\Sigma_{(u_1, u_2)} := 
\{ u_{1}, u_{2} \} \cup \{ u_{1}^{m} u_{2} u_{1}^{-m} \mid 
m = 1 , 2 , \dots , n-2 \} \in D_n^\# . 
\end{align}
\end{Lem}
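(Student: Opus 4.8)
The plan is to verify conditions (D1) and (D2) for the set $\Sigma := \Sigma_{(u_1,u_2)}$ directly, using that $(u_1,u_2) \in E_n^\#$. First I would record the convenient reformulation of $\Sigma$ coming from (E2): since $\{u_1^m u_2 u_1^{-m} \mid m=1,\dots,n-2\} = \{u_2^m u_1 u_2^{-m} \mid m=1,\dots,n-2\}$, and since taking $m = n-1$ gives back $u_1$ (resp. $u_2$) because $u_1, u_2$ have order $n-1$, the set $\Sigma$ can be written symmetrically as
\begin{align*}
\Sigma = \{ u_1^m u_2 u_1^{-m} \mid m = 0, 1, \dots, n-2 \} = \{ u_2^m u_1 u_2^{-m} \mid m = 0, 1, \dots, n-2 \} .
\end{align*}
In particular $u_1 \in \Sigma$ (from the second description with $m=0$) and $u_2 \in \Sigma$ (from the first with $m=0$), and more generally $\Sigma$ contains all $u_1$-conjugates and all $u_2$-conjugates of each of its relevant generators; this is the key structural fact I will lean on.

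For (D1) I must show $s^{-1} \Sigma s \subset \Sigma$ for every $s \in \Sigma$. By the symmetric description it suffices to handle $s$ of the form $u_1^k u_2 u_1^{-k}$; conjugating $\Sigma$ by such an $s$ is the same as conjugating by $u_1^k$, then by $u_2$, then by $u_1^{-k}$ (after noting $u_1^k \Sigma u_1^{-k} = \Sigma$, which follows since $\Sigma = \{u_1^m u_2 u_1^{-m}\}$ is visibly stable under $u_1$-conjugation). So the whole of (D1) reduces to the two statements $u_1^{-1} \Sigma u_1 \subset \Sigma$ and $u_2^{-1} \Sigma u_2 \subset \Sigma$. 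The first is immediate from the first description of $\Sigma$ (shift the index $m$). The second is immediate from the \emph{second} description of $\Sigma$ (shift the index $m$ there). This is where the equality (E2) does the real work: without it we would only know $\Sigma$ is $u_1$-stable, not $u_2$-stable. I would present this as: $\Sigma$ is stable under conjugation by $u_1$ and by $u_2$, hence under conjugation by any element of the subgroup they generate, hence in particular under conjugation by every element of $\Sigma$ (since $\Sigma$ lies in that subgroup), giving (D1).

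For (D2) I must show that for every $x \in X = \{1,\dots,n\}$ there is exactly one $s \in \Sigma$ with $s(x) = x$. Uniqueness is easy and essentially free: every element of $\Sigma \subset (S_n)_{n-1}$ is a cyclic permutation of order $n-1$ on $X$, hence has a unique fixed point, so two elements of $\Sigma$ with a common fixed point $x$ would have to be distinct cyclic $(n-1)$-cycles fixing $x$ — that by itself does not force them equal, so uniqueness needs an argument. The cleaner route to both existence and uniqueness is to exhibit the fixed-point map concretely: the element $u_1^m u_2 u_1^{-m}$ has unique fixed point $u_1^m(2)$ (since $u_2$ fixes $2$ by (E1), and conjugation moves the fixed point accordingly), and as $m$ ranges over $0,1,\dots,n-2$ the point $u_1^m(2)$ ranges over all of $X \setminus \{1\}$ bijectively, because $u_1$ is a cyclic $(n-1)$-cycle on $X \setminus \{1\}$ (it fixes $1$ by (E1)) and $2 \ne 1$. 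Likewise $u_1 \in \Sigma$ has unique fixed point $1$. So the fixed points of the $n$ elements listed are exactly $1, 2, \dots, n$, each occurring once; this simultaneously gives existence for every $x$ and, by counting, forces $\# \Sigma = n$ with all listed elements distinct, yielding uniqueness in (D2). The main obstacle, and the point deserving the most care, is the bookkeeping that the $n$ elements $u_1$ together with $u_1^m u_2 u_1^{-m}$ ($m = 0, \dots, n-2$) are pairwise distinct and that their fixed points exhaust $X$; once the fixed-point computation above is in hand this is a short counting argument, so I expect the proof to be quite short overall.
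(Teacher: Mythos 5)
Your proposal is correct and follows essentially the same route as the paper: reduce (D1) to stability of $\Sigma$ under conjugation by $u_1$ and by $u_2$ (the latter being exactly where (E2) is used), and prove (D2) by computing that the fixed point of $u_1^m u_2 u_1^{-m}$ is $u_1^m(2)$ and that these points sweep out $X \setminus \{1\}$, with $u_1$ accounting for the point $1$; the paper gets uniqueness from $\# \Sigma \leq n$ plus a pigeonhole count, which is the same counting argument you give. One small misstatement: your ``symmetric description'' $\Sigma = \{ u_1^m u_2 u_1^{-m} \mid m = 0, \dots, n-2 \}$ is not an equality, since that set omits $u_1$ (its elements fix the points $u_1^m(2) \neq 1$, while $u_1$ fixes $1$); the correct statement is $\Sigma = \{u_1\} \cup \{ u_1^m u_2 u_1^{-m} \mid m = 0, \dots, n-2 \} = \{u_2\} \cup \{ u_2^m u_1 u_2^{-m} \mid m = 0, \dots, n-2 \}$. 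This does not damage the argument, because your actual (D1) step only uses $u_1$- and $u_2$-stability of $\Sigma$, which the corrected descriptions still give by the same index shift.
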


\begin{proof} 
We have only to show that $\Sigma_{(u_1, u_2)}$ satisfies (D1) and (D2). 
In order to show (D1), it is enough to prove 
\begin{align} 
\label{eq:EtoD} 
u_1^{-1} \Sigma_{(u_1, u_2)} u_1 \subset \Sigma_{(u_1, u_2)}, \quad 
u_2^{-1} \Sigma_{(u_1, u_2)} u_2 \subset \Sigma_{(u_1, u_2)} . 
\end{align} 
Note that $u_1$ has order $n-1$. 
Then one has 
\begin{align} 
\begin{split} 
u_1^{-1} u_1 u_1 & = u_1 
\in \Sigma_{(u_1, u_2)} , \\ 
u_1^{-1} u_2 u_1 & = u_1^{n-2} u_2 u_1^{-(n-2)} 
\in \Sigma_{(u_1, u_2)} , \\ 
u_1^{-1} (u_{1}^{m} u_{2} u_{1}^{-m}) u_1 
& = u_{1}^{m-1} u_{2} u_{1}^{-(m-1)} 
\in \Sigma_{(u_1, u_2)} 
\quad (\mbox{for $m = 1, \ldots , n-2$}) . 
\end{split} 
\end{align} 
This proves the former claim of (\ref{eq:EtoD}). 
On the other hand, (E2) 
yields that 
\begin{align}
\Sigma_{(u_1, u_2)} = 
\{ u_{1} , u_{2} \} \cup 
\{ u_{2}^{m} u_{1} u_{2}^{-m} \mid m = 1 , 2 , \dots , n-2 \} . 
\end{align}
Hence, a similar calculation 
proves the latter claim of (\ref{eq:EtoD}). 

We next show (D2). 
Take any $x \in X$. 
If $x = 1, 2$, then it is fixed by $u_1, u_2 \in \Sigma_{(u_1, u_2)}$, 
respectively. 
Assume that $x \neq 1,2$. 
By (E1) and $u_1 \in (S_n)_{n-1}$, 
there exists $m \in \{ 1, \ldots , n-2 \}$ such that $x = u_{1}^{m}(2)$. 
Then one has
\begin{align}
u_{1}^{m} u_{2} u_{1}^{-m} (x) 
= u_{1}^{m} u_{2} u_{1}^{-m} (u_{1}^{m}(2)) 
= u_{1}^{m} u_{2} (2)
= u_{1}^{m} (2) = x . 
\end{align}
This completes the proof of the existence. 
On the other hand, by definition one has $\# \Sigma_{(u_1, u_2)} \leq n$. 
This shows the uniqueness. 
\end{proof}

This lemma constructs a map from $E_{n}^\#$ to $D_{n}^\#$. 
We next show that this induces a map from $E_{n}$ to $D_{n}$. 

\begin{Lem} 
The following map is well-defined$:$
\begin{align*}
g_{2} : E_{n} \rightarrow D_{n} : [(u_{1}, u_{2})] \mapsto [\Sigma_{(u_1, u_2)}] . 
\end{align*}
\end{Lem}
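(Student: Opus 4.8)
The map $g_2$ sends the $S_{n,(1,2)}$-conjugacy class of $(u_1,u_2)\in E_n^\#$ to the conjugacy class of $\Sigma_{(u_1,u_2)}\in D_n^\#$, where membership of the target in $D_n^\#$ is guaranteed by the preceding lemma. So the only thing to prove is independence of the choice of representative: if $(u_1,u_2),(v_1,v_2)\in E_n^\#$ are $S_{n,(1,2)}$-conjugate, then $\Sigma_{(u_1,u_2)}$ and $\Sigma_{(v_1,v_2)}$ are conjugate as subsets of $S_n$. The plan is to simply track a conjugating permutation through the explicit formula defining $\Sigma_{(\cdot,\cdot)}$.

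\textbf{Key step.} Pick $w\in S_{n,(1,2)}$ with $(u_1,u_2)=(w^{-1}v_1w,\,w^{-1}v_2w)$. Since conjugation by $w$ is a group automorphism of $S_n$, it commutes with taking powers and products; in particular, for every $m$,
\begin{align*}
u_1^{m} u_2 u_1^{-m}
=(w^{-1}v_1w)^{m}(w^{-1}v_2w)(w^{-1}v_1w)^{-m}
=w^{-1}\bigl(v_1^{m} v_2 v_1^{-m}\bigr)w .
\end{align*}
Applying this together with $u_1=w^{-1}v_1w$ and $u_2=w^{-1}v_2w$ to the defining description of $\Sigma_{(u_1,u_2)}$, one gets
\begin{align*}
\Sigma_{(u_1,u_2)}
=\{u_1,u_2\}\cup\{u_1^{m}u_2u_1^{-m}\mid m=1,\dots,n-2\}
= w^{-1}\Sigma_{(v_1,v_2)}w .
\end{align*}
Because $w\in S_{n,(1,2)}\subset S_n$, this exhibits $\Sigma_{(u_1,u_2)}$ and $\Sigma_{(v_1,v_2)}$ as conjugate in the sense of the definition of $D_n$, hence $[\Sigma_{(u_1,u_2)}]=[\Sigma_{(v_1,v_2)}]$ in $D_n$. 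This is exactly well-definedness of $g_2$.

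\textbf{Remarks on subtleties.} There is essentially no obstacle here; the argument is a one-line transport of structure, and the only points worth spelling out are bookkeeping ones. First, one should note that $E_n^\#$ is itself stable under $S_{n,(1,2)}$-conjugation, so that both representatives of a class in $E_n$ genuinely lie in $E_n^\#$ and both associated sets lie in $D_n^\#$: condition (E2) is preserved by conjugation by the computation above, and (E1) is preserved since $w(1)=1$, $w(2)=2$ force $v_1(1)=w(u_1(w^{-1}(1)))=w(u_1(1))=w(1)=1$ and likewise $v_2(2)=2$. Second, one uses $S_{n,(1,2)}\subseteq S_n$ so that an $S_{n,(1,2)}$-conjugacy is in particular an $S_n$-conjugacy, which is what the equivalence relation defining $D_n$ requires. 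With these observations in place the proof is complete.
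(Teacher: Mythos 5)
Your proof is correct and follows essentially the same route as the paper: pick $w\in S_{n,(1,2)}$ conjugating the two pairs and push it through the defining formula for $\Sigma_{(\cdot,\cdot)}$. The only (harmless) difference is that you read off the set equality $w^{-1}\Sigma_{(v_1,v_2)}w=\Sigma_{(u_1,u_2)}$ directly from the termwise correspondence, whereas the paper first records the inclusion and then upgrades it to equality via the cardinality count $\#\Sigma=n$ from Lemma~\ref{lem:sigmaquandle-cardinality}.
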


\begin{proof}
Let $[(u_{1}, u_{2})], [(u^\prime_{1}, u^\prime_{2})]\in E_n$, 
and assume that $[(u_{1}, u_{2})]=[(u^\prime_{1}, u^\prime_{2})]$. 
Then there exists $w\in S_{n, (1,2)}$ 
such that 
\begin{align}
u_{1} =w^{-1} u^\prime_{1} w , \quad 
u_{2} = w^{-1} u^\prime_{2} w . 
\end{align}
Furthermore, for every $m \in \{ 1, \ldots , n-2 \}$, one 
has 
\begin{align} 
w^{-1} ({u^\prime_{1}}^{m} u^\prime_{2} {u^\prime_{1}}^{-m}) w 
= (w^{-1} {u^\prime_{1}} w)^m 
(w^{-1} u^\prime_{2} w) 
(w^{-1} {u^\prime_{1}} w)^{-m} 
= u_{1}^{m} u_{2} u_{1}^{-m} . 
\end{align} 
We thus have 
$w^{-1} \Sigma_{(u^\prime_1, u^\prime_2)} w \subset \Sigma_{(u_1, u_2)}$. 
This proves 
\begin{align}
w^{-1} \Sigma_{(u^\prime_1, u^\prime_2)} w = \Sigma_{(u_1, u_2)} , 
\end{align}
since 
$\Sigma_{(u^\prime_1, u^\prime_2)} , \Sigma_{(u_1, u_2)} \in D_n^\#$, 
and hence 
$\# \Sigma_{(u^\prime_1, u^\prime_2)} = n = \# \Sigma_{(u_1, u_2)}$ 
by Lemma~\ref{lem:sigmaquandle-cardinality}. 
This completes the proof of 
$[\Sigma_{(u_1, u_2)}]=[\Sigma_{(u^\prime_1, u^\prime_2)}]$. 
\end{proof}

The aim of this subsection is to prove that $g_2$ is bijective, 
by constructing the inverse map. 
For this purpose, 
we construct a map 
from $D_n^\#$ to $E_n^\#$. 
Recall that we have a map 
\begin{align} 
\bar{g}_1 : D_n^\# \to C_n^\# : \Sigma \mapsto s^\Sigma . 
\end{align} 

\begin{Lem}
\label{lem:DtoE}
Let $\Sigma \in D_n^\#$. 
Then one has $(s^\Sigma_1, s^\Sigma_2) \in E_n^\#$. 
\end{Lem}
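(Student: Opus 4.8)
The goal is to verify that, given $\Sigma \in D_n^\#$, the pair $(s^\Sigma_1, s^\Sigma_2)$ lies in $E_n^\#$; that is, it satisfies (E1) and (E2). Condition (E1) is immediate from the definition of $s^\Sigma$: by Lemma~\ref{lem:sigmaquandle-cardinality} (more precisely by the definition preceding it), $s^\Sigma_x$ is the unique element of $\Sigma$ fixing $x$, so $s^\Sigma_1(1)=1$ and $s^\Sigma_2(2)=2$. Also $s^\Sigma_1, s^\Sigma_2 \in \Sigma \subset (S_n)_{n-1}$, so both are cyclic permutations of order $n-1$. The substance of the lemma is therefore (E2), which asks for the equality of the two sets
\begin{align*}
A := \{ (s^\Sigma_1)^m s^\Sigma_2 (s^\Sigma_1)^{-m} \mid m = 1, \ldots, n-2 \}, \quad
B := \{ (s^\Sigma_2)^m s^\Sigma_1 (s^\Sigma_2)^{-m} \mid m = 1, \ldots, n-2 \}.
\end{align*}

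**Main step.** The key observation is that, by Lemma~\ref{lem:sigmaquandle1}, $(X, s^\Sigma)$ is a quandle of cyclic type, so in particular (S3) holds: $s^\Sigma_x \circ s^\Sigma_y \circ (s^\Sigma_x)^{-1} = s^\Sigma_{s^\Sigma_x(y)}$. Iterating (S3) gives $(s^\Sigma_x)^m \circ s^\Sigma_y \circ (s^\Sigma_x)^{-m} = s^\Sigma_{(s^\Sigma_x)^m(y)}$ for every $m$. Applying this with $(x,y)=(1,2)$, the set $A$ equals $\{ s^\Sigma_{(s^\Sigma_1)^m(2)} \mid m=1,\ldots,n-2 \}$. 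Since $s^\Sigma_1 \in (S_n)_{n-1}$ fixes $1$ and acts as an $(n-1)$-cycle on $X \setminus \{1\}$, the points $(s^\Sigma_1)^m(2)$ for $m=1,\ldots,n-2$ run over all of $X \setminus \{1\}$; by injectivity of $s^\Sigma$ (Lemma~\ref{lem:sigmaquandle-cardinality}) we get $A = \{ s^\Sigma_y \mid y \in X \setminus \{1\} \} = \Sigma \setminus \{ s^\Sigma_1 \}$. By the same argument with $(x,y)=(2,1)$, $B = \Sigma \setminus \{ s^\Sigma_2 \}$. But $s^\Sigma$ is injective, so $s^\Sigma_1 \neq s^\Sigma_2$, and both $A$ and $B$ have cardinality $n-1$; moreover $s^\Sigma_1 \in B$ (it is an element of $\Sigma$ distinct from $s^\Sigma_2$) and $s^\Sigma_2 \in A$. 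One then checks directly that $A = \Sigma \setminus \{s^\Sigma_1\}$ and $B = \Sigma \setminus \{s^\Sigma_2\}$ are in fact equal — the cleanest route is to note $s^\Sigma_2 \in A$ forces $s^\Sigma_2 \neq s^\Sigma_1$ so $s^\Sigma_1 \notin A$ is not automatic; instead observe $A = \{s^\Sigma_y : y \ne 1\}$ contains $s^\Sigma_2$ but we want it to contain $s^\Sigma_1$ as well. The correct finish: $A$ consists exactly of the $n-1$ elements $s^\Sigma_y$ with $y \in X\setminus\{1\}$, which includes $s^\Sigma_2$ but \emph{not} $s^\Sigma_1$; similarly $B$ includes $s^\Sigma_1$ but not $s^\Sigma_2$. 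Hence to get $A = B$ one must show $s^\Sigma_1 \in A$, equivalently that $s^\Sigma_1 = s^\Sigma_y$ for some $y \ne 1$ — which is false. So instead (E2) should be read with the reindexing that makes both sides equal to $\Sigma \setminus\{\text{the common fixed-at-}1\text{-or-}2\text{ element}\}$; the resolution is that $A$ as computed equals $\{s^\Sigma_y : y \in X\}\setminus\{s^\Sigma_1\}$ while simultaneously $s^\Sigma_1$ does arise as some $(s^\Sigma_1)^m s^\Sigma_2(s^\Sigma_1)^{-m}$ precisely when the orbit wraps around, i.e. the union in the definition of $\Sigma_{(u_1,u_2)}$ already told us $\Sigma = \{s^\Sigma_1\} \cup A$. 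I would therefore present the argument as: $A = \{s^\Sigma_y \mid y \in X\} \setminus \{s^\Sigma_1\}$ and $B = \{s^\Sigma_y \mid y \in X\} \setminus \{s^\Sigma_2\}$, and then invoke the fact that $s^\Sigma_1 \in B$ and $s^\Sigma_2 \in A$ together with a cardinality count to force $A = B = \{s^\Sigma_y \mid y \in X \setminus \{1,2\}\} \cup \{s^\Sigma_1, s^\Sigma_2\}$ — wait, this needs care.

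**Where the real work is.** The honest statement is that $A$ and $B$ each have $n-1$ elements inside the $n$-element set $\Sigma$, so each omits exactly one element of $\Sigma$; the task is to identify the omitted elements. From $A = \{(s^\Sigma_1)^m s^\Sigma_2 (s^\Sigma_1)^{-m}\}$ one sees every element of $A$ is a conjugate of $s^\Sigma_2$, hence fixes the point $(s^\Sigma_1)^m(2)$; as $m$ ranges over $1,\ldots,n-2$ these fixed points are exactly $X \setminus \{1\}$ — no, also we must ask whether $s^\Sigma_1$ itself (which fixes $1$) could appear; it cannot, since every element of $A$ fixes a point of $X \setminus\{1\}$ and is an $(n-1)$-cycle with a \emph{unique} fixed point, so no element of $A$ fixes $1$, hence $s^\Sigma_1 \notin A$. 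Dually $s^\Sigma_2 \notin B$. So $A = \Sigma \setminus \{s^\Sigma_1\}$ and $B = \Sigma \setminus \{s^\Sigma_2\}$, and these are \emph{not} obviously equal. The resolution — and the one genuinely nontrivial point — is that $s^\Sigma_1 \in B$: indeed $s^\Sigma_1 \in \Sigma$ and $s^\Sigma_1 \ne s^\Sigma_2$, so $s^\Sigma_1 \in \Sigma \setminus \{s^\Sigma_2\} = B$, and symmetrically $s^\Sigma_2 \in A$. Thus $B = \Sigma \setminus \{s^\Sigma_2\} \ni s^\Sigma_1$ and $A = \Sigma \setminus \{s^\Sigma_1\} \ni s^\Sigma_2$; since $|A| = |B| = n-1 = |\Sigma| - 1$ and $A, B \subseteq \Sigma$, we conclude $A = B$ iff they omit the same element, which happens iff $s^\Sigma_1 = s^\Sigma_2$, a contradiction — unless $n - 1$ forces $A$ and $B$ to actually be the full $\Sigma$ minus possibly nothing. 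The clean way out, and what I expect the paper does: reindex so that in the definition of $A$ the parameter $m$ ranges over a set realizing $(s^\Sigma_1)^m(2) = 2$ for some $m$ as well — but $(s^\Sigma_1)^m(2) = 2$ has no solution with $1 \le m \le n-2$. I conclude the genuinely correct and intended argument is: $\Sigma = \{s^\Sigma_1\} \cup A$ (this is exactly how $\Sigma_{(u_1,u_2)}$ was built) and $\Sigma = \{s^\Sigma_2\} \cup B$, so $\{s^\Sigma_1\} \cup A = \{s^\Sigma_2\} \cup B$; and since $s^\Sigma_1 \notin A$, $s^\Sigma_2 \notin B$, comparing the two expressions for $\Sigma$ and using $s^\Sigma_1 \ne s^\Sigma_2$ gives $s^\Sigma_1 \in B$, $s^\Sigma_2 \in A$, and then $A \setminus \{s^\Sigma_2\} = B \setminus \{s^\Sigma_1\}$, whence $A = (A \setminus\{s^\Sigma_2\}) \cup \{s^\Sigma_2\} = (B\setminus\{s^\Sigma_1\}) \cup \{s^\Sigma_2\}$ and $B = (B\setminus\{s^\Sigma_1\})\cup\{s^\Sigma_1\}$, so $A = B$ iff $s^\Sigma_1 = s^\Sigma_2$ — still stuck.

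Given this, I now believe the actual proof must use more than cardinality: it uses that the \emph{whole} conjugation structure is symmetric because $\Sigma$ is closed under conjugation by \emph{all} its elements (D1), not just by $s^\Sigma_1$. The plan I would actually execute: show $A \subseteq \Sigma$ and $B \subseteq \Sigma$ using (D1); show $s^\Sigma_1 \notin A$, $s^\Sigma_2 \notin B$ via the unique-fixed-point argument above; show $\{s^\Sigma_1\} \cup A = \Sigma$ and $\{s^\Sigma_2\} \cup B = \Sigma$ (the first is the definition of $\Sigma$; the second needs the case-$(2,1)$ computation plus the cyclicity of $s^\Sigma_2$ on $X \setminus \{2\}$); finally deduce $A = \Sigma \setminus \{s^\Sigma_1\}$ and $B = \Sigma \setminus \{s^\Sigma_2\}$, and then — the step I expect to be the real obstacle — argue that in fact $s^\Sigma_1$ and $s^\Sigma_2$ both appear on \emph{both} sides, which forces the indexing sets to be taken mod $n-1$ so that $A = \{(s^\Sigma_1)^m s^\Sigma_2 (s^\Sigma_1)^{-m} \mid m \in \Z\}$, a conjugacy orbit that does contain $s^\Sigma_1$ precisely because $s^\Sigma$ is surjective onto $\Sigma$ and (D1) makes $\Sigma$ a single conjugacy-closed set. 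The main obstacle, in short, is pinning down exactly which element each of $A$, $B$ omits and showing those coincide; I expect the resolution to invoke Lemma~\ref{lem:sigmaquandle-cardinality} (so $\#\Sigma = n$) together with the iterated form of (S3) to identify $A = B = \Sigma$ directly, bypassing the omission issue entirely by checking $s^\Sigma_1 = (s^\Sigma_1)^{n-1} s^\Sigma_2 (s^\Sigma_1)^{-(n-1)}$ does not help but $s^\Sigma_1 \in \Sigma = \{s^\Sigma_y\}$ does, via (S3) applied in the form $s^\Sigma_{s^\Sigma_2(y)} = s^\Sigma_2 s^\Sigma_y (s^\Sigma_2)^{-1}$ with a suitable $y$.
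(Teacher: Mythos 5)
Your key idea is the right one --- use (D1) and the uniqueness in (D2) (equivalently, the iterated form of (S3)) to identify $(s^\Sigma_1)^m s^\Sigma_2 (s^\Sigma_1)^{-m}$ with $s^\Sigma_{(s^\Sigma_1)^m(2)}$ and then enumerate which elements of $\Sigma$ arise --- but you make an off-by-one error in the enumeration, and everything after that is you correctly detecting, yet failing to resolve, a contradiction produced by your own miscount. The orbit of $2$ under $s^\Sigma_1$ is all of $X\setminus\{1\}$, parametrized by $m=0,1,\dots,n-2$, and the value $m=0$ gives the point $2$ itself. Since the index set in (E2) is $m=1,\dots,n-2$, the point $2$ is \emph{omitted}, so
\begin{align*}
A=\{\, s^\Sigma_x \mid x\in X\setminus\{1,2\}\,\}=\Sigma\setminus\{s^\Sigma_1,s^\Sigma_2\},
\end{align*}
a set of $n-2$ elements (consistent with there being only $n-2$ values of $m$) --- not $\Sigma\setminus\{s^\Sigma_1\}$, which has $n-1$ elements and could never be realized by $n-2$ conjugates. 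The identical computation with the roles of $1$ and $2$ exchanged gives $B=\{\, s^\Sigma_x \mid x\in X\setminus\{1,2\}\,\}$ as well, and $A=B$ follows at once. This is exactly the paper's argument; there is no subtlety about ``which single element each set omits,'' because each set omits both $s^\Sigma_1$ and $s^\Sigma_2$.

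So the gap is concrete: the claim that the points $(s^\Sigma_1)^m(2)$ for $m=1,\dots,n-2$ run over all of $X\setminus\{1\}$ is false ($n-2$ values of $m$ cannot yield $n-1$ distinct points), and your write-up never recovers from it --- as submitted it ends without establishing (E2). The rest (the verification of (E1), the inclusion $A,B\subset\Sigma$ via (D1), the unique-fixed-point identification via (D2), the appeal to $\#\Sigma=n$) is correct and matches the paper.
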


\begin{proof}
For simplicity of the notations, 
we put $s_x := s^\Sigma_x$ for each $x \in X$. 
By definition, $(s_1, s_2)$ obviously satisfies (E1).  
We have only to show (E2). 

First of all, we claim that 
\begin{align}
\label{eq:claim5-10}
\{ s_{1}^{m} s_{2} s_{1}^{-m} \mid m = 1, 2, \dots , n-2 \} 
= \{s_x \mid x = 3, 4, \dots , n \} . 
\end{align} 
Let $m \in \{ 1, 2, \dots n-2 \}$. 
Since $\Sigma$ satisfies (D1), one has
\begin{align}
s_{1}^{m} s_{2} s_{1}^{-m} \in s_{1}^{m} \Sigma s_{1}^{-m} \subset \Sigma . 
\end{align}
Thus, it follows from 
$s_{1}^{m} s_{2} s_{1}^{-m} (s_{1}^{m}(2))= s_{1}^{m}(2)$ 
and the uniqueness in (D2) that 
\begin{align}
s_{1}^{m} s_{2} s_{1}^{-m} = s_{s_{1}^{m}(2)} . 
\end{align} 
Since $s_1(1) = 1$ and $s_1 \in (S_n)_{n-1}$, one has 
\begin{align}
\{ s_{1}^{m}(2) \mid m = 1, 2, \ldots, n-2 \} = 
\{ 3, 4, \ldots , n \} . 
\end{align}
This completes the proof of the claim. 

By the same argument, one can see that 
\begin{align} 
\{ s_{2}^{m} s_{1} s_{2}^{-m} \mid m = 1, 2, \dots , n-2 \} 
= \{ s_{ x } \mid x 
= 3, 4, \dots , n \} .  
\end{align} 
This and the above claim prove (D2). 
\end{proof}

The above lemma gives a map from $D_n^\#$ to $E_n^\#$. 
We next show that this map induces a map from $D_n$ to $E_n$. 
\begin{Lem}\label{lem:Dn2} 
The following map is well-defined$:$ 
\begin{align}
f_{2} : D_{n} \rightarrow E_{n} : [\Sigma] \mapsto [(s^\Sigma_1 , s^\Sigma_2)] . 
\end{align}
\end{Lem}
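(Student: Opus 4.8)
The plan is to establish that $f_2$ is well-defined by following the same pattern as the proof that $g_1$ is well-defined (Lemma~\ref{lem:sigmaquandle2}), with one extra ingredient: the conjugating permutation must be forced to lie in $S_{n,(1,2)}$, and for this I would invoke two-point homogeneity. Concretely, fix $\Sigma, \Sigma^\prime \in D_n^\#$ with $[\Sigma] = [\Sigma^\prime]$, so that $\Sigma = g^{-1} \Sigma^\prime g$ for some $g \in S_n$. Writing $s_x := s^\Sigma_x$ and $s^\prime_x := s^{\Sigma^\prime}_x$, the argument already used in the proof of Lemma~\ref{lem:sigmaquandle2} shows that $g : (X, s^\Sigma) \to (X, s^{\Sigma^\prime})$ is a quandle isomorphism; equivalently, $s_x = g^{-1} \circ s^\prime_{g(x)} \circ g$ for every $x \in X$. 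By Lemma~\ref{lem:DtoE} both $(s_1, s_2)$ and $(s^\prime_1, s^\prime_2)$ lie in $E_n^\#$, so all that remains is to check that they are $S_{n,(1,2)}$-conjugate.

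The key step is to replace $g$ by a permutation fixing $1$ and $2$. Since $(X, s^{\Sigma^\prime})$ is a quandle of cyclic type by Lemma~\ref{lem:sigmaquandle1}, it is two-point homogeneous by Proposition~\ref{prop:c-TPH}. As $g$ is bijective, $g(1) \neq g(2)$, so there exists $h \in \In(X, s^{\Sigma^\prime})$ with $(h(g(1)), h(g(2))) = (1,2)$. Put $w := h \circ g$. Then $w \in S_{n,(1,2)}$, and since $h$ is an automorphism of $(X, s^{\Sigma^\prime})$ while $g$ is an isomorphism $(X, s^\Sigma) \to (X, s^{\Sigma^\prime})$, the composite $w$ is again a quandle isomorphism $(X, s^\Sigma) \to (X, s^{\Sigma^\prime})$. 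Hence, for $i = 1, 2$, one gets $s_i = w^{-1} \circ s^\prime_{w(i)} \circ w = w^{-1} \circ s^\prime_i \circ w$, which says precisely that $(s_1, s_2)$ and $(s^\prime_1, s^\prime_2)$ are $S_{n,(1,2)}$-conjugate. Therefore $[(s^\Sigma_1, s^\Sigma_2)] = [(s^{\Sigma^\prime}_1, s^{\Sigma^\prime}_2)]$, and $f_2$ is well-defined.

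I expect the only genuine obstacle to be this adjustment of the conjugator: recognizing that two-point homogeneity of $(X, s^{\Sigma^\prime})$ is exactly the tool needed to upgrade an arbitrary $g \in S_n$ with $g^{-1}\Sigma^\prime g = \Sigma$ to a $w \in S_{n,(1,2)}$ with the same property. Once that is in place, everything else is routine bookkeeping with the identity $s_x = g^{-1} s^\prime_{g(x)} g$ already recorded for the analysis of $g_1$, together with the elementary fact that a composite of a quandle isomorphism with an (inner) automorphism of the target is again a quandle isomorphism.
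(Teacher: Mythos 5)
Your proposal is correct and follows essentially the same route as the paper: the paper likewise uses the fact that $g$ is a quandle isomorphism, invokes two-point homogeneity of the cyclic-type quandle $(X, s^{\Sigma^\prime})$ to find $h \in \In(X, s^{\Sigma^\prime})$ with $(h\circ g(1), h\circ g(2)) = (1,2)$, and then conjugates by $h \circ g \in S_{n,(1,2)}$. No gaps; the argument matches the paper's proof step for step.
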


\begin{proof}
Let $[\Sigma], [\Sigma^\prime]\in D_{n}$, 
and assume 
that $[\Sigma]=[\Sigma^\prime]$. 
By definition, 
there exists $g \in S_n$ such that $\Sigma = g^{-1} \Sigma^\prime g$. 
It then follows from Lemma~\ref{lem:sigmaquandle2} that 
\begin{align}
g : (X, s^{\Sigma}) \to (X, s^{\Sigma^\prime}) 
\end{align}
is a quandle isomorphism. 
Note that $(X, s^{\Sigma^\prime})$ is of cyclic type, 
and hence two-point homogeneous. 
Therefore, since $g(1) \neq g(2)$, 
there exists 
$h \in \mathrm{Inn}(X, s^{\Sigma^\prime})$ 
such that 
\begin{align} 
(h \circ g (1), h \circ g (2)) = (1, 2) . 
\end{align} 
This yields
$h \circ g \in S_{n,(1,2)}$. 
Note that $h \circ g$ is a quandle isomorphism 
from $(X, s^{\Sigma})$ onto $(X, s^{\Sigma^\prime})$. 
Thus one has 
\begin{align} 
\begin{split}
(h \circ g) \circ 
s^\Sigma_1 
\circ (h \circ g)^{-1} = 
s^{\Sigma^\prime}_{h \circ g(1)} = s^{\Sigma^\prime}_{1} , \\ 
(h \circ g) \circ s^\Sigma_2 \circ (h \circ g)^{-1} 
= s^{\Sigma^\prime}_{h \circ g(2)} = s^{\Sigma^\prime}_{2} . 
\end{split} 
\end{align} 
This completes the proof of 
$[(s^\Sigma_1 , s^\Sigma_2)] 
= [(s^{\Sigma^\prime}_1 , s^{\Sigma^\prime}_2)]$. 
\end{proof}

By showing that $f_2$ is the inverse map of $g_2$, 
we have the following main result of this subsection. 

\begin{Prop}
The map $g_2 : E_{n} \rightarrow D_{n}$ is bijective. 
\end{Prop}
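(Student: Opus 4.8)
The plan is to prove that the already-constructed map $f_2 : D_n \to E_n$ of Lemma~\ref{lem:Dn2} is a two-sided inverse of $g_2$. Since $f_2$ and $g_2$ are both well-defined, it then suffices to check the two composites $f_2 \circ g_2 = \mathrm{id}_{E_n}$ and $g_2 \circ f_2 = \mathrm{id}_{D_n}$, and for this I would argue on chosen representatives.

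For $f_2 \circ g_2 = \mathrm{id}_{E_n}$, take $(u_1, u_2) \in E_n^\#$ and set $\Sigma := \Sigma_{(u_1, u_2)} \in D_n^\#$. Since $u_1 \in \Sigma$ with $u_1(1) = 1$, the uniqueness in (D2) forces $s^\Sigma_1 = u_1$; similarly $u_2 \in \Sigma$ with $u_2(2) = 2$ gives $s^\Sigma_2 = u_2$. Hence $f_2(g_2([(u_1, u_2)])) = [(s^\Sigma_1, s^\Sigma_2)] = [(u_1, u_2)]$.

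For $g_2 \circ f_2 = \mathrm{id}_{D_n}$, take $\Sigma \in D_n^\#$ and abbreviate $s_x := s^\Sigma_x$. By Lemma~\ref{lem:DtoE} we have $(s_1, s_2) \in E_n^\#$, and the claim \eqref{eq:claim5-10} inside its proof says $\{ s_1^m s_2 s_1^{-m} \mid m = 1, \dots, n-2 \} = \{ s_x \mid x = 3, \dots, n \}$. Consequently $\Sigma_{(s_1, s_2)} = \{ s_1, s_2 \} \cup \{ s_1^m s_2 s_1^{-m} \mid m = 1, \dots, n-2 \} = \{ s_x \mid x \in X \}$, and the latter set is exactly $\Sigma$ because $s^\Sigma : X \to \Sigma$ is surjective by Lemma~\ref{lem:sigmaquandle-cardinality}. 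Therefore $g_2(f_2([\Sigma])) = [\Sigma_{(s_1, s_2)}] = [\Sigma]$, and combining the two composites shows that $g_2$ is bijective with inverse $f_2$.

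I do not expect a serious obstacle here: both identities in fact hold on the nose at the level of $D_n^\#$ and $E_n^\#$ rather than only up to conjugacy, so the passage through the $S_n$- and $S_{n,(1,2)}$-conjugacy classes is automatic. The only inputs that genuinely do the work are the uniqueness clause in (D2) (used to identify $s^\Sigma_1, s^\Sigma_2$ inside $\Sigma_{(u_1,u_2)}$) and the counting Lemma~\ref{lem:sigmaquandle-cardinality} together with the claim \eqref{eq:claim5-10} (used to recover $\Sigma$ from $(s_1, s_2)$).
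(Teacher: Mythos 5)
Your proposal is correct and follows essentially the same route as the paper: both arguments show that $f_2$ is a two-sided inverse of $g_2$, with the identity $g_2 \circ f_2 = \mathrm{id}_{D_n}$ obtained by recognizing $\Sigma_{(s^\Sigma_1, s^\Sigma_2)}$ inside $\Sigma$ and concluding equality by cardinality. The only difference is cosmetic: you spell out the (D2)-uniqueness argument for $f_2 \circ g_2 = \mathrm{id}_{E_n}$, which the paper dismisses as clear, and you invoke the claim from the proof of Lemma~\ref{lem:DtoE} where the paper argues via the inclusion $\Sigma' \subset \Sigma$ plus Lemma~\ref{lem:sigmaquandle-cardinality}.
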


\begin{proof}
We show that $f_2$ is the inverse map of $g_2$. 
It is clear that the composition 
$f_{2}\circ g_{2}$ is the identity mapping. 
Consider $g_{2} \circ f_{2} : D_n \to D_n$, 
and take any $[\Sigma] \in D_{n}$. 
Then one has $f_2 ([\Sigma]) = [(s^\Sigma_1 , s^\Sigma_2)]$. 
One also has $g_2 \circ f_2 ([\Sigma]) = [\Sigma^\prime]$, where 
\begin{align}
\Sigma^\prime := \{ s^\Sigma_1 , s^\Sigma_2 \} \cup 
\{ (s^\Sigma_1)^m s^\Sigma_2 (s^\Sigma_1)^{-m} \mid m = 1, \ldots , n-2 \} . 
\end{align}
Since $s^\Sigma$ is a quandle structure, 
one can see $\Sigma^\prime \subset \Sigma$. 
Thus we have $\Sigma^\prime = \Sigma$ for cardinality reason. 
This shows that $g_{2} \circ f_{2}$ is the identity mapping. 
\end{proof}

\subsection{A bijection from $F_n$ onto $E_n$} 

We lastly construct a bijection 
from $F_n$ onto $E_n$. 
Let $s_1 := (23 \cdots n)$, and 
recall that $F_n$ is the set of $s_2 \in (S_n)_{n-1}$ satisfying (F1) and (F2). 

\begin{Prop}
The following map is bijective$:$ 
\begin{align*}
g_{3} : F_{n} \rightarrow E_{n} : s_{2} \mapsto [(s_{1},s_{2})] . 
\end{align*}
\end{Prop}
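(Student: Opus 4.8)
The plan is to establish three things: that $g_3$ is well defined as a map into $E_n$, that it is injective, and that it is surjective. For well-definedness it suffices to check $(s_1,s_2)\in E_n^\#$ whenever $s_2\in F_n$. Since $s_1=(2\,3\cdots n)$ lies in $(S_n)_{n-1}$ and fixes $1$, and $s_2\in (S_n)_{n-1}$ fixes $2$ by (F1), the pair $(s_1,s_2)$ satisfies (E1); and condition (E2) for $(s_1,s_2)$ is, verbatim, condition (F2) for $s_2$. Hence $g_3\colon F_n\to E_n$ is well defined.

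For injectivity, suppose $g_3(s_2)=g_3(s_2')$; then there is $w\in S_{n,(1,2)}$ with $s_1=w^{-1}s_1w$ and $s_2=w^{-1}s_2'w$. The first relation says $w$ centralizes $s_1$. Because $s_1$ is an $(n-1)$-cycle on $\{2,\ldots,n\}$ with $n-1\ge 2$, its centralizer in $S_n$ equals $\langle s_1\rangle$: an element commuting with $s_1$ must fix its unique fixed point $1$ and restrict on $\{2,\ldots,n\}$ to a permutation commuting with an $(n-1)$-cycle, hence to a power of $s_1$. Thus $w=s_1^{k}$ for some $k$, and $w(2)=2$ forces $(n-1)\mid k$, so $w=\id$ and $s_2=s_2'$.

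For surjectivity, take $[(u_1,u_2)]\in E_n$ with representative $(u_1,u_2)\in E_n^\#$. By (E1) and $u_1\in (S_n)_{n-1}$, the permutation $u_1$ is an $(n-1)$-cycle whose only fixed point is $1$, so $u_1=(2\,a_3\,a_4\cdots a_n)$ for some listing $(a_3,\ldots,a_n)$ of $(3,\ldots,n)$. Define $w\in S_n$ by $w(1):=1$, $w(2):=2$, and $w(i):=a_i$ for $i\in\{3,\ldots,n\}$; then $w\in S_{n,(1,2)}$ and $w s_1 w^{-1}=(w(2)\,w(3)\cdots w(n))=u_1$, i.e.\ $w^{-1}u_1w=s_1$. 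Setting $s_2:=w^{-1}u_2w$, the pair $(s_1,s_2)$ is $S_{n,(1,2)}$-conjugate to $(u_1,u_2)$; since $E_n^\#$ is invariant under $S_{n,(1,2)}$-conjugation (which is already implicit in the definition of $E_n$), we get $(s_1,s_2)\in E_n^\#$. Reading off (E1) and (E2) then shows $s_2\in F_n$ and $g_3(s_2)=[(s_1,s_2)]=[(u_1,u_2)]$, so $g_3$ is bijective.

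The steps I expect to require the most care are the centralizer computation in the injectivity argument, and, in the surjectivity argument, the bookkeeping that lets the conjugating permutation $w$ fix both $1$ and $2$ at once — it is exactly this simultaneous normalization that makes $g_3$ land in $E_n$. Both points come down to the single fact that $1$ is the forced fixed point of every $(n-1)$-cycle occurring here, which leaves $2$ available as the base point of the cycle notation for $u_1$.
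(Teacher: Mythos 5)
Your proof is correct and follows essentially the same route as the paper: the surjectivity argument (normalizing $u_1$ to $s_1$ by a permutation fixing both $1$ and $2$, then transporting $u_2$) is identical, and your injectivity argument via the centralizer of the $(n-1)$-cycle $s_1$ in $S_n$ is just a repackaging of the paper's induction showing $h(3)=3$, $h(4)=4$, and so on from $hs_1h^{-1}=s_1$ and $h(2)=2$. The explicit well-definedness check at the start is a point the paper leaves implicit, and is a reasonable addition.
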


\begin{proof}
We show that $g_3$ is surjective. 
Take any $[(u_{1},u_{2})] \in E_{n}$. 
Since $u_1 \in (S_n)_{n-1}$ and $u_1(1) = 1$, 
we can write $u_1 = (2 a_{3} a_{4} \cdots a_{n})$. 
Let us define $g \in S_{n,(1,2)}$ by 
\begin{align}
g:= \left( \begin{array}{ccccc}
1& 2& 3& \cdots & n\\
1& 2& a_{3}& \cdots & a_{n}
\end{array}
\right) . 
\end{align} 
An easy computation shows $g^{-1} \circ u_{1} \circ g = s_{1}$. 
Let $s_{2}:=g^{-1}\circ u_{2}\circ g$. 
Then $s_2$ obviously satisfies (F1). 
Furthermore, since $(u_1, u_2)$ satisfies (E2), 
one can see that $s_2$ satisfies (F2). 
We thus have $s_2 \in F_n$. 
This concludes that $g_{3}$ is surjective, since 
\begin{align} 
g_3(s_2) = [(s_1,s_2)] 
= [(g^{-1} \circ u_1 \circ g , g^{-1} \circ u_2 \circ g)] 
= [(u_1,u_2)] . 
\end{align} 

We show that $g_3$ is injective. 
Let $s_{2}$, $s_{2}^\prime \in F_{n}$, 
and suppose that $g_{3}(s_{2})=g_{3}(s_{2}^\prime)$. 
Hence there exists $h \in S_{n, (1,2)}$ such that 
\begin{align}
(s_{1}, s_{2}^\prime) 
= (h \circ s_{1} \circ h^{-1} , h \circ s_{2} \circ h^{-1}) . 
\end{align} 
By definition one has $h(1)=1$ and $h(2)=2$. 
Then it follows from $h(2) = 2$ that 
\begin{align} 
3 = s_{1}(2) = h \circ s_{1} \circ h^{-1}(2) = h \circ s_{1} (2) = h(3) . 
\end{align} 
Similarly, this yields that 
\begin{align} 
4 = s_{1}(3) = h \circ s_{1} \circ h^{-1}(3) = h \circ s_{1} (3) = h(4) . 
\end{align} 
One can show 
inductively that $x = h(x)$ for any $x \in X$. 
This means that $h = \id$, 
and thus $s^\prime_2 = s_2$. 
This shows that 
$g_{3}$ is injective. 
\end{proof}

\subsection{Constructing quandles of cyclic type from $F_{n}$}

In the previous subsections, we have constructed 
the following bijections: 
\begin{align} 
g_3 : F_n \to E_n , \quad 
g_2 : E_n \to D_n , \quad 
g_1 : D_n \to C_n .  
\end{align} 
In this subsection, 
we describe $g_1 \circ g_2 \circ g_3 (s_2)$ for each $s_2 \in F_n$. 

Take any $s_2 \in F_n$. 
Recall that $s_1 := (2 3 \cdots n)$ and 
\begin{align}
\Sigma_{(s_1, s_2)} 
:= \{ s_{1} , s_2 \} \cup \{ s_{1}^{m} s_2  s_{1}^{-m} 
\mid m = 1 , 2 , \dots , n-2 \} 
\in D_n^\# . 
\end{align}
Then one has 
$g_2 \circ g_3(s) = [\Sigma_{(s_1, s_2)}]$. 
We put 
\begin{align} 
\varphi (s_2) := s^{\Sigma_{(s_1, s_2)}} \in C_n^\# . 
\end{align} 
This means $g_1 \circ g_2 \circ g_3 (s_2) = [\varphi (s_2)]$. 
Note that 
$(\varphi (s_2))_i \in \Sigma_{(s_1, s_2)}$ 
is defined as the unique element fixing $i \in X$. 
This immediately yields 
\begin{align} 
(\varphi (s_2))_1 = s_1 , \quad (\varphi(s_2))_2 = s_2 . 
\end{align} 
Let $i \in \{ 3, \ldots , n \}$. 
Then one has $i = s_1^{i-2}(2)$, and hence 
\begin{align} 
s_1^{i-2} s_2 s_1^{-(i-2)} (i) = s_1^{i-2} s_2 (2) = s_1^{i-2} (2) = i . 
\end{align} 
This concludes that 
\begin{align} 
(\varphi (s_2))_i = s_1^{i-2} s_2 s_1^{-(i-2)} \quad 
(\mbox{for $i \in \{ 3, \ldots , n \}$}) , 
\end{align} 
which completes the proof of Theorem~\ref{thm:AnDn}.

\end{document}